\newcommand{\Spec}{\operatorname{Spec}}
\newcommand{\Rep}{\operatorname{Rep}}
\renewcommand{\Vec}{\operatorname{\bf Vec}}
\newcommand{\Ker}{\operatorname{Ker}}
\newcommand{\IM}{\operatorname{Im}}
\newcommand{\car}{\operatorname{char}}
\newcommand{\Coker}{\operatorname{Coker}}
\newcommand{\Hom}{\operatorname{Hom}}
\newcommand{\End}{\operatorname{End}}
\newcommand{\Br}{\operatorname{Br}}
\newcommand{\LN}{\operatorname{LN}}
\renewcommand{\ln}{\operatorname{ln}}
\newcommand{\Tr}{\operatorname{Tr}}
\newcommand{\Alb}{\operatorname{Alb}}
\newcommand{\Div}{\operatorname{Div}}
\newcommand{\Pic}{\operatorname{Pic}}
\newcommand{\NS}{\operatorname{NS}}
\newcommand{\tr}{{\operatorname{tr}}}
\newcommand{\crys}{{\operatorname{crys}}}
\newcommand{\tors}{{\operatorname{tors}}}
\newcommand{\rg}{{\operatorname{rk}}}
\newcommand{\ord}{{\operatorname{ord}}}
\newcommand{\alg}{{\operatorname{alg}}}
\newcommand{\Fcriso}{\operatorname{\bf Fcriso}}
\newcommand{\by}[1]{\overset{#1}{\longrightarrow}}
\newcommand{\iso}{\by{\sim}}
\newcommand{\osi}{\overset{\sim}{\longleftarrow}}
\newcommand{\inj}{\hookrightarrow}
\newcommand{\tto}{\dashrightarrow}
\newcommand{\surj}{\rightarrow\!\!\!\!\!\rightarrow}
\newcommand{\colim}{\varinjlim}
\renewcommand{\lim}{\varprojlim}
\DeclareFontFamily{U}{wncy}{}
\DeclareFontShape{U}{wncy}{m}{n}{%
<5>wncyr5%
<6>wncyr6%
<7>wncyr7%
<8>wncyr8%
<9>wncyr9%
<10>wncyr10%
<11>wncyr10%
<12>wncyr6%
<14>wncyr7%
<17>wncyr8%
<20>wncyr10%
<25>wncyr10}{}
\DeclareMathAlphabet{\cyr}{U}{wncy}{m}{n}
\newcommand{\sA}{\mathcal{A}}
\newcommand{\sC}{\mathcal{C}}
\newcommand{\sF}{\mathcal{F}}
\newcommand{\sJ}{\mathcal{J}}
\newcommand{\sM}{\mathcal{M}}
\newcommand{\sZ}{\mathcal{Z}}
\newcommand{\Sha}{\cyr{X}}
\newcommand{\sha}{\cyr{x}}
\newcommand{\C}{\mathbf{C}}
\newcommand{\F}{\mathbf{F}}
\newcommand{\G}{\mathbb{G}}
\newcommand{\K}{\mathbf{K}}
\renewcommand{\L}{\mathbb{L}}
\newcommand{\N}{\mathbf{N}}
\newcommand{\Q}{\mathbf{Q}}
\newcommand{\Z}{\mathbf{Z}}
\newcommand{\bG}{\mathbb{G}}
\newcommand{\et}{{\operatorname{\acute{e}t}}}
\newcommand{\ff}{\mathfrak{f}}
\newcommand{\un}{\mathbf{1}}
\renewcommand{\epsilon}{\varepsilon}
\renewcommand{\phi}{\varphi}
\newtheorem{lemme}{Lemma}[section]
\newtheorem{prop}[lemme]{Proposition}
\newtheorem{thm}[lemme]{Theorem}
\newtheorem{Th}{Theorem}
\newtheorem{Thm}{Theorem}
\newtheorem{Corr}[Th]{Corollary}
\theoremstyle{definition}
\newtheorem{defn}[lemme]{Definition}
\theoremstyle{remark}
\newtheorem{rque}[lemme]{Remark}
\newtheorem{rques}[lemme]{Remarks}
\newtheorem{qn}[lemme]{Question}
\newenvironment{thlist}{\begin{list}{\rm{(\roman{enumi})}}%
{\usecounter{enumi}}}%
{\end{list}}
\numberwithin{equation}{section}
\begin{document}

\title[$L$-function of an abelian variety over a function field]{A motivic  formula for the $L$-function of an abelian variety over a function field}
\author{Bruno Kahn}
\address{IMJ-PRG\\Case 247\\
4 place Jussieu\\
75252 Paris Cedex 05\\France}
\email{bruno.kahn@imj-prg.fr}
\date{March 2, 2016}
\begin{abstract}
Let $A$ be an abelian variety over the function field of a smooth projective curve $C$ over an algebraically closed field $k$.  We compute the $l$-adic cohomology groups
\[H^i(C,j_*H^1(\bar A,\Q_l)), \quad j:\eta\inj C\]
in terms of arithmetico-geometric invariants of $A$. We apply this, when $k$ is the algebraic closure of a finite field, to a motivic computation of the $L$-function of $A$.
\end{abstract}
\subjclass[2010]{11G40, 14C15}
\maketitle

\tableofcontents

\section{Introduction}\label{s:coh}

\subsection{The $L$-function of an abelian variety} Let $K$ be a global field, and let $A$ be an abelian variety over $K$. Its $L$-function is classically defined as
\begin{equation}\label{eqL0}
L(A,s) = \prod_{v\in \Sigma_K^f} \det(1-\pi_vN(v)^{-s}\mid H^1_{l_v}(A)^{I_v})^{-1}
\end{equation}
where $\Sigma_K^f$ is the set of non-archimedean places of $K$ and, given $v\in \Sigma_K^f$, $H^1_{l_v}(A)=H^1_\et(A\otimes_K K_s,\Q_{l_v})$ for a separable closure $K_s$ of $K$ and some prime $l_v$ different from the residue characteristic
, $I_v$ is the absolute inertia group and $\pi_v$ is the geometric Frobenius, well-defined modulo $I_v$ as a conjugacy class. This function does not depend on the choice of the $l_v$'s, as a consequence of Weil's proof of the Riemann hypothesis for curves and of  the ``weight-monodromy conjecture", which is known in this case by \cite[exp. IX, Th. 4.3 and Cor. 4.4]{SGA7}. 

In positive characteristic $p$, we may choose $l_v=l$ for a fixed prime $l\ne p$. We have the more precise theorem, due to Grothendieck (\cite[(13), p. 194]{corr-gs}, see also \cite[\S 10]{deligne-constantes}) and Deligne \cite[Th. 3.2.3]{weilII}:

\begin{Thm}\label{t0.1}Suppose $\car K>0$; let $k=\F_q$ be the field of constants of $K$. Then
one has the formula
\[L(A,s) =\frac{P_1(q^{-s})}{P_0(q^{-s})P_2(q^{-s})}\]
where $P_i\in \Z[t]$ with $P_i(0)=1$. The inverse roots of $P_i$ are Weil $q$-numbers of weight $i+1$.
 Moreover, $L(A,s)$ has a functional equation of the form
\[L(A,2-s) = ab^s L(A,s)\]
for suitable integers $a,b$.
\end{Thm}

In this article, we give a formula for the polynomials $P_i$ in terms of \emph{pure motives over $k$}. To express the result, let us take some notation:
\begin{itemize}
\item $B=\Tr_{K/k} A$ is the $K/k$-trace of $A$ (see \cite[App. A]{picfini} or \cite{conrad}); recall that this is (essentially) the largest abelian subvariety of $A$ which is defined over $k$.
\item $\LN(A,K\bar k/\bar k)=A(K\bar k)/B(\bar k)$ is the \emph{geometric Lang-N\'eron group} of $A$, where $\bar k$ is an algebraic closure of $k$: it is finitely generated by the Lang-N\'eron theorem (e.g. \cite[App. B]{picfini}, \cite{conrad} or \cite{ln}). We view it as a Galois representation over $k$.
\item $T_l(E)=\Hom(\Q_l/\Z_l,E)$; $V_l(E)=T_l(E)\otimes \Q$ for an abelian group $E$.  
\end{itemize}


\begin{Th}\label{t0.2}  Let $\sM$ be the category of pure motives over $k$ with rational coefficients, modulo rational equivalence.\footnote{Throughout this paper we use the contravariant convention for pure motives,  as in \cite{kleiman} or \cite{scholl}.} \\
a) We have
\[P_0(t) = Z(h^1(B),t),\quad P_2(t)=Z(h^1(B),qt)\]
where $h^1(B)\in \sM$ is the degree $1$ part of the Chow-K\"unneth decomposition of the motive of $B$ \cite{den-murre}.\\
b) We have
\[P_1(t) = Z(\ln(A,K/k),qt)^{-1}\cdot Z(\sha(A,K/k),t)^{-1}\]
(a product of two polynomials), where $\ln(A,K/k)$ is the Artin motive associated to $\LN(A,K\bar k/\bar k)$ and $\sha(A,K/k)\in \sM$ is  an effective Chow motive  of weight $2$ whose $l$-adic realization is $V_l(\Sha(A,K\bar k))(-1)$, where $\Sha(A,K\bar k)$ is the geometric Tate-\v Safarevi\v c group of $A$.
\end{Th}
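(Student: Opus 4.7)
The plan is to combine the Grothendieck--Lefschetz trace formula with Kummer theory on the generic fibre. First, expanding each local Euler factor via the identification $(j_*H^1(\bar A,\Q_l))_v = H^1(\bar A,\Q_l)^{I_v}$ of stalks with inertia invariants, and applying the trace formula on $\bar C$, one obtains
\[L(A,s) = \prod_{i=0}^{2} \det\bigl(1-Fq^{-s}\mid H^i(\bar C, j_* H^1(\bar A,\Q_l))\bigr)^{(-1)^{i+1}},\]
so that $P_i(t) = \det(1-Ft\mid H^i(\bar C, j_* H^1(\bar A,\Q_l)))$. The remaining task is to identify each $H^i$ as the $l$-adic realization of an explicit pure motive in $\sM$.

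For part (a), one has $H^0(\bar C, j_*\sF) = \sF_{\bar\eta}^{\pi_1(\bar\eta)}$; applied to $\sF=H^1(\bar A,\Q_l)$, the Lang--N\'eron theorem implies $V_l(A(K\bar k)) = V_l(B_{\bar k})$ as $G_k$-modules (the quotient $A(K\bar k)/B(\bar k)$ being finitely generated, its torsion is bounded). Using semisimplicity of the geometric $G_{K\bar k}$-action on $V_l A$, dualizing gives $H^0(\bar C, j_*H^1(\bar A,\Q_l)) \cong H^1(B_{\bar k},\Q_l)$, the realization of $h^1(B)$. For $H^2$, I would invoke Poincar\'e--Verdier duality on $\bar C$ together with the self-duality $H^1(\bar A,\Q_l) \cong H^1(\bar A,\Q_l)^\vee(-1)$ arising from the Weil pairing and a polarization of $A$, obtaining $H^2(\bar C, j_*H^1(\bar A,\Q_l)) \cong H^0(\bar C, j_*H^1(\bar A,\Q_l))(-1)$. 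This yields $P_0(t)=Z(h^1(B),t)$ and $P_2(t)=Z(h^1(B),qt)$.

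For part (b), the plan is to start from the Kummer sequence for $A_{\eta_{\bar k}}$. Passing to the limit in $l^n$, tensoring with $\Q_l$, and again using the Lang--N\'eron finite generation produces
\[0 \to \LN(A, K\bar k/\bar k) \otimes \Q_l \to H^1(\eta_{\bar k}, V_l A) \to V_l H^1(\eta_{\bar k}, A) \to 0.\]
The Leray spectral sequence for $j$ identifies $H^1(\bar C, j_* V_l A)$ with the kernel of $H^1(\eta_{\bar k},V_l A) \to \bigoplus_v H^1(I_v, V_l A)$. Two checks are needed: (i) $\LN \otimes \Q_l$ lies in this kernel, since its classes come from points extending to the N\'eron model and are thus unramified at every $v$; (ii) the image in the quotient $V_l H^1(\eta_{\bar k}, A)$ of this unramified kernel equals $V_l(\Sha(A,K\bar k))$, directly from the definition of $\Sha$ as the global-to-local kernel. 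These produce
\[0 \to \LN(A, K\bar k/\bar k) \otimes \Q_l \to H^1(\bar C, j_* V_l A) \to V_l(\Sha(A, K\bar k)) \to 0.\]
Applying $\det(1-Ft\mid\cdot)$ to this sequence, and correcting for the Tate twist between $j_*V_l A$ and $j_*H^1(\bar A,\Q_l)$, will recover the asserted factorization of $P_1(t)$.

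The main obstacle is lifting these Galois representations to honest pure motives in $\sM$. The Artin motive $\ln(A,K/k)$ is immediate from the equivalence between continuous finite-dimensional $\Q$-representations of $G_k$ and Artin motives. Constructing $\sha(A,K/k)$ as a weight $2$ Chow motive is the delicate point: the plan would be to choose a smooth projective model $\pi:\sX\to C$ with $\sX_\eta$ birational to $A$, use the Leray spectral sequence $H^p(C, R^q\pi_*\Q_l)\Rightarrow H^{p+q}(\sX,\Q_l)$ to realize $H^1(\bar C, j_*H^1(\bar A,\Q_l))$ as a subquotient of $H^2(\sX,\Q_l)$, and cut out the corresponding submotive of $h^2(\sX)$ by means of algebraic correspondences effecting the N\'eron--Severi versus transcendental splitting. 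The Artin--Tate-type isomorphism relating $\Br(\sX)$ to $\Sha(A,K\bar k)$ then identifies the transcendental piece with $V_l(\Sha)$. Producing these projectors as honest algebraic cycles, and checking independence of the choice of model, is where the real work lies.
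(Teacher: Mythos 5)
Your approach to parts (a) and (b) at the level of identifying the cohomology groups $H^i(\bar C, j_*H^1(\bar A,\Q_l))$ is a plausible alternative to the paper's route. The paper does not argue with Galois cohomology and Kummer theory on the generic fibre; instead it works throughout with the N\'eron model and ``$l$-adifies'' Grothendieck's $\G_m$-coefficient computations from \cite[\S 4]{BrIII} on a smooth projective surface $S/k$ fibred over $C$. Concretely, the paper reduces to the cohomology of the N\'eron model $\sJ$ (Lemma \ref{eq12.7}), computes $H^*(C,B)$ for $B=j_*\Pic_{\Gamma/K}$ via the Leray spectral sequence for $S\to C$ with $\G_m$-coefficients, converts these statements to $l$-adic ones, and then passes from $B$ to $\sJ$ using the degree exact sequence $0\to\sJ\to B\to\Z\to 0$. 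This is more elaborate than your Kummer-theoretic plan, but it has a purpose: it expresses everything in terms of invariants of the surface $S$ (such as $\Br(S)$ and $H^2_\tr(S,\Q_l(1))$), which is exactly what is needed for the motivic lift in the next paragraph.

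The genuine gap in your proposal is in the construction of $\sha(A,K/k)$ as an honest Chow motive. You propose to take a smooth projective model $\sX\to C$ with generic fibre birational to $A$, and cut out a submotive of $h^2(\sX)$ by projectors realizing the N\'eron--Severi versus transcendental splitting. But for $\dim A=g>1$, $\sX$ has dimension $g+1>2$, and for such a variety neither the K\"unneth projector for $h^2$ nor the refined decomposition $h^2=h^2_{\rm alg}\oplus t^2$ is known to be given by \emph{algebraic} correspondences; this is part of the standard conjectures and is precisely the obstruction the whole construction has to work around. The paper's key idea, which your plan does not contain, is to sidestep this by \emph{never leaving the case of surfaces}: one writes $A$ as a direct summand up to isogeny of the Jacobian $J=\Pic^0_{\Gamma/K}$ of a curve $\Gamma/K$, chooses a smooth projective $k$-surface $S$ fibred over $C$ with generic fibre $\Gamma$, and then uses Proposition \ref{p3.1} to transport the corresponding projector $\pi\in\End_K(J)\otimes\Q\cong CH^1_\equiv(\Gamma\times_K\Gamma)\otimes\Q$ to a projector $r(\pi)\in CH^2_\equiv(S\times_k S)\otimes\Q=\End(t^2(S))$. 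The transcendental motive $t^2(S)$ of the \emph{surface} $S$ is available unconditionally by Kahn--Murre--Pedrini \cite{kmp}, and $\sha(A,K/k)$ is then defined as the image of $r(\pi)$ in $t^2(S)$ (Proposition \ref{p.sha}). Without this ``reduce to Jacobians, then use a surface'' step, your construction of $\sha$ would not go through for $g>1$; you acknowledge the difficulty (``this is where the real work lies''), but the plan you outline would not resolve it.
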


In Theorem \ref{t0.2}, we used the \emph{Z-function} of a motive $M\in \sM$ \cite[p. 81]{kleiman}. It is known to be a rational function of $t$, with a functional equation; more precisely, if $M$ is homogeneous of weight $w$, then $Z(M,t)$ is a polynomial or the inverse of a polynomial according as $w$ is odd or even. That its inverse roots are Weil $q$-numbers of weight $w$ depends on \cite{weilI} rather than \cite{weilII}, see Proposition \ref{p3.2} and Remark \ref{r3.1}.  Theorem \ref{t0.2} also provides a proof that $L(A,s)$ is independent of $l$ avoiding \cite[Exp. IX]{SGA7}. Note that  $B$ already appears in \cite[(4.4)]{tate}.

The motive $\sha(A,K/k)$ is really the new character in this story. We construct it ``by hand'' in Proposition \ref{p.sha}; however, we show in \cite[Cor. 8.4]{adjoints} that it is actually canonical and functorial in $A$ (for homomorphisms of abelian varieties).

Theorem \ref{t0.2} ``reduces'' the Birch and Swinnerton-Dyer conjecture for $A$ to the non-vanishing of $Z(\sha(A,K/k),t)$ at $t=q^{-1}$. The existence of $\sha(A,K/k)$ actually yields a simple proof of the following corollary by basically quoting the relevant literature \cite{DRW,milne,IR}:

\begin{Corr}\label{ckt} a) (cf. Schneider \cite[Lemma 2 (i)]{schneider} for $l\ne p$) One has the equality and inequality, for any prime number $l$ ($l=p$ is allowed): 
\begin{multline}\label{eq.corg}
\ord_{s=1} L(A,s)=\rg A(K)+\dim V_l(\Sha(A,K\bar k))^{(1)}\\
\ge \rg A(K)+\dim V_l(\Sha(A,K))
\end{multline}
where $V_l(\Sha(A,K\bar k))^{(1)}$ is the generalised eigenspace for the eigenvalue $1$ of the action of the arithmetic Frobenius on $V_l(\Sha(A,K\bar k))$. (In particular, $\dim  V_l(\Sha(A,K\bar k))^{(1)}$ is independent of $l$.)\\
b) (Kato-Trihan, \cite{kato-trihan}) The following conditions are equivalent:
\begin{thlist}
\item $\ord_{s=1} L(A,s)=\rg A(K)$.
\item $\Sha(A,K)\{l\}$ is finite for some prime $l$.
\item $\Sha(A,K)\{l\}$ is finite for all primes $l$.
\item $\Sha(A,K)$ is finite.
\end{thlist}
\end{Corr}

On the other hand, computing the special value at $s=1$ factor by factor looks very uninspiring, see \S \ref{sv}. It seems that a new idea is needed to nicely relate Theorem \ref{t0.2} to the formula in the Birch and Swinnerton-Dyer conjecture, for example by revisiting Schneider's height pairing \cite[p. 507]{schneider} as a determinant pairing on total derived complexes. I hope to come back to this in a further work.
\bigskip

\enlargethispage*{20pt}

To prove Theorem \ref{t0.2}, we reduce to the case where $A$ is the Jacobian $J$ of a curve $\Gamma$. In this case,  we also get a precise relationship between $L(J,s)$ and the zeta function of a smooth projective $k$-surface spreading $\Gamma$, which was my original motivation for this work. More precisely,   
let  $\Gamma$ be a smooth, projective, 
geometrically irreducible curve over $K$, $C$ the smooth projective $k$-curve with function field $K$ and $S$  a smooth projective  surface over $k$, fibred 
over $C$ by a flat morphism $f$ with generic  fibre $\Gamma$ (its existence is justified in footnote \ref{fn2} below):
\begin{equation}\begin{CD}\label{eq12.0}
\Gamma@>>> S\\
@V{f'}VV @V{f}VV\\
\Spec K @>j>> C\\
&& @V{p}VV\\
&& \Spec k.
\end{CD}\end{equation}

Define (cf. \cite{dpp})
\begin{align*}
L(h^i(\Gamma),s) &=  \prod_{v\in \Sigma_K} \det(1-\pi_vN(v)^{-s}\mid H^i_l(\Gamma)^{I_v})^{(-1)^{i+1}}\\
L(h(\Gamma),s) &= \prod_{i=0}^2 L(h^i(\Gamma),s)
\end{align*}
so that
\begin{gather*}
L(h^0(\Gamma),s)=\zeta(C,s),\quad L(h^2(\Gamma),s)=\zeta(C,s-1),\\
L(h^1(\Gamma),s) = L(J,s)^{-1}
\end{gather*}
(beware the exponent change!).

For convenience, we express the next theorem in terms of the \emph{zeta function of a motive $M\in \sM$}, rather than its $Z$-function:
\[\zeta(M,s)=Z(M,q^{-s}); \quad \zeta(h(X),s)=\zeta(X,s) \text{ for $X$ smooth projective.}\]

\begin{Th}\label{c4.1} a) We have
\[\frac{\zeta(S,s)}{L(K,h(\Gamma),s)}=  \zeta(a(D),s-1)\]
where $a(D)$ is the Artin motive associated to the ``divisor of multiple fibres''
\[ D=\bigoplus_{c\in C_{(0)}} \bar D_c,\quad \bar D_c=\Coker(\Z\by{f^*} \bigoplus_{x\in \text{Supp}(f^{-1}(c))^{(0)}}\Z).\]
b) The function $\displaystyle\frac{L(K,h(\Gamma),s)}{\zeta(\ln(J,K/k),s-1)}$ only depends on the function field $K(\Gamma)$.
\end{Th}

(Here as in the rest of this paper, we write $Z^{(p)}, Z_{(p)}$ for the set of points of codimension/dimension $p$ of a scheme $Z$.)

Theorem \ref{t0.2}, Corollary \ref{ckt} and Theorem \ref{c4.1}  are results on abelian varieties over a global field of positive characteristic. More intriguing to me is that Theorem \ref{t0.2} leads to a definition of the $L$-function of an abelian variety over a \emph{finitely generated field of Kronecker dimension $2$}, and to an analogue of Theorem \ref{c4.1} for the total $L$-function of a surface $S$ over a number field $k$ sitting in a fibration \eqref{eq12.0} (Theorem \ref{p6.1}). This might be viewed a step towards answering the awkwardness of \cite[\S 4]{tatepoles}: meanwhile, it raises more questions than it answers. See \S \ref{s:global} for more details.

\subsection{The method}\label{s1.2} To prove Theorem \ref{t0.2}, we start from Grothendieck's formula for $P_i(t)$:
\begin{equation}\label{eq:groth}
P_i(t) =\det(1-\pi_kt\mid H^i(C\otimes_k \bar k,j_*H^1_l(A)))
\end{equation}
where $\pi_k$ is the geometric Frobenius of $k$, $C$ is the smooth projective $k$-curve with function field $K$ and $j:\Spec K\inj C$ is the inclusion of the generic point. The issue is then to give an expression of the cohomology groups $H^i(C\otimes_k \bar k,j_*H^1_l(A)))$.
 
This follows from a cohomological computation valid in greater generality.  Consider the situation of \eqref{eq12.0}, where $k$ is now any field.\footnote{\label{fn2} Given $f'$  and assuming $k$ perfect, to get such a diagram we can start from a projective model $S_0$ of $\Gamma$ over $\Spec k$ obtained from some very ample divisor on $\Gamma$, then resolve the singularities of the closure in $S_0\times_k C$ of the graph of the rational map $S_0\tto C$ \cite{abyankhar}; the resulting $f$ is automatically flat by \cite[Ch. II, Prop. 9.7]{hartshorne}. When $k$ is imperfect, one may have to pass to a finite purely inseparable extension to get \eqref{eq12.0}, but this is a harmless restriction, compare proof of Proposition \ref{p.sha}.} For a prime number $l$ invertible in $k$, we write 
\[H^2_\tr(\bar S,\Q_l(1))=\Coker\left(\NS(\bar S)\otimes \Q_l\to 
H^2(\bar S,\Q_l(1))\right)\]
where $\NS(\bar S)$ is the N\'eron-Severi group of $\bar S:=S\otimes_k \bar k$, $\bar k$ being an algebraic closure of $k$ as before. Here are two other descriptions of this group:
\begin{equation}\label{eq:kummer}
H^2_\tr(\bar S,\Q_l(1))\simeq V_l(\Br(\bar S))\simeq V_l(\Sha(J,K\bar k)) 
\end{equation}
where $\Br(\bar S)$ is the Brauer group of $\bar S$, $J$ is the Jacobian variety of $\Gamma$ and $\Sha(J,K\bar k)$ denotes its geometric Tate-\v Safarevi\v c group: the first isomorphism follows from the Kummer exact sequence and \cite[Cor. 2.2]{BrII}, and the second one from \cite[pp. 120/121]{BrIII}.  If $k_s$ (resp. $k_p$) is the separable (resp. perfect) closure of $k$ in $\bar k$, then $G_k:=Gal(k_s/k)\iso Gal(\bar k/k_p)$ acts naturally on the groups above. 

\begin{Th}\label{t12.1}There are $G_k$-equivariant isomorphisms
\begin{align*}
H^0(\bar C,j_*R^1f'_*\Q_l(1))&\simeq H^1(\Tr_{K/k} J\otimes_k \bar k,\Q_l(1))\\
H^2(\bar C,j_*R^1f'_*\Q_l(1))&\simeq H^1(\Tr_{K/k} J\otimes_k \bar k,\Q_l)
\end{align*}
(where $\bar C= C\otimes_k \bar k$), and a $G_k$-equivariant exact sequence
\[0\to \LN(J,K\bar k/\bar k)\otimes\Q_l\to H^1(\bar C,j_*R^1f'_*\Q_l(1))\to H^2_\tr(\bar S,\Q_l(1))\to 0.\]
\end{Th}

\begin{Corr}\label{c1.1} Let  $f':A\to\Spec K$ be an abelian variety, where $K=k(C)$ as in \eqref{eq12.0}. There are $G_k$-equivariant isomorphisms
\begin{gather*}
H^0(\bar C,j_*R^1f'_*\Q_l(1))\simeq  H^1(\Tr_{K/k} A\otimes_k \bar k,\Q_l(1))\\
H^2(\bar C,j_*R^1f'_*\Q_l(1))\simeq  H^1(\Tr_{K/k} A\otimes_k \bar k,\Q_l)
\end{gather*}
(where $\bar C:= C\otimes_k \bar k$), and a $G_k$-equivariant exact sequence
\[0\to \LN(A,K\bar k/\bar k)\otimes\Q_l\to H^1(\bar C,j_*R^1f'_*\Q_l(1))\to V_l(\Sha(A,K\bar k))\to 0.\]
\end{Corr}

\begin{rques} 1) It can be shown by a polarisation argument that the exact sequences of Theorem \ref{t12.1} and Corollary \ref{c1.1} are $G_k$-split.\\
2) If $k=\C$, it seems likely that similar results can be obtained for the analytic cohomology of $R^1f'_*\Q(1)$ with techniques similar  to those used in the next section, replacing Kummer sequences by exponential sequences.
\end{rques}

The main technical part of this work is to prove Theorem \ref{t12.1}. The method is to ``$l$-adify'' Grothendieck's computations with $\G_m$ coefficients in \cite[\S 4]{BrIII}\footnote{These computations also appear with less generality in two other expos\'es of the volume \emph{Dix expos\'es sur la cohomologie des sch\'emas}:  \cite[\S 3]{raynaud} and \cite[Th. 3.1]{tate}.}. In a forthcoming work with Am\'\i lcar Pacheco \cite{kp}, we shall extend these results to a general fibration of smooth projective $k$-varieties, with a different and (hopefully) less unpleasant proof. 


\subsection*{Contents of this paper}  Theorem \ref{t12.1}  is proven in Section \ref{s:proof1}; Corollary \ref{c1.1} and Theorem \ref{t0.2} are proven in Section \ref{s:proof2}. In Section \ref{s:comp}, we show how Theorem \ref{t12.1} yields an identity in $K_0$ of a category of $l$-adic representations or pure motives, see Theorem \ref{t12.2}: this identity implies Theorem \ref{c4.1}. In Section \ref{crys}, we recall well-known facts on the crystalline realisation and present them in a convenient way.  In Section \ref{s:finite}, we examine what Theorem \ref{t0.2} teaches us on the functional equation and special values of $L(A,s)$; in particular, we prove Corollary \ref{ckt} in \S \ref{ooz}. Finally, in Section \ref{s:global}, we examine what happens when we replace the finite field $k$ by a global field.

\subsection*{Acknowledgements} This work was partly  inspired by the papers of Hindry-Pacheco \cite{hp} and Hindry-Pacheco-Wazir \cite{hpw}; I would also like to acknowledge several discussions with Am\'\i lcar Pacheco around it. I thank the R\'eseau franco-br\'esilien de math\'ematiques (RFBM) for its support for two visits to Rio de Janeiro in 2008 and 2010.

Theorems \ref{t0.2} (for the Jacobian of a curve),   \ref{c4.1},  \ref{t12.1} and \ref{t12.2} were obtained  in the fall 2008 at the Tata Institute of Fundamental Research of Mumbai during its $p$-adic semester; I thank this institution for its hospitality and R. Sujatha for having invited me, and would like to add a thought for the Mumbai attacks which took place in this period. These results were initially part of a preliminary version of \cite{adjoints}, from which I extracted them. The other results were obtained more recently. 

\section{Proof of Theorem \ref{t12.1}}\label{s:proof1}

In this section, we assume $k=\bar k$ to avoid carrying notation like $C\otimes_k \bar k$, etc. This is harmless because, in general, $\Tr_{K\bar k/\bar k}(J\otimes_k \bar k)=(\Tr_{K/k} J)\otimes_k \bar k$ (\cite[Prop. 6]{picfini} or \cite[Th. 6.8]{conrad}). It is immediate to check that all exact sequences and isomorphisms appearing below are Galois-equivariant. We follow the notation of \cite[\S 4]{BrIII} as much as possible; in particular, $B$ denotes here $j_*\Pic_{\Gamma/K}$ rather than $\Tr_{K/k} J$. We shall use:

\begin{lemme}[\protect{\cite[Cor. 4.3.12]{EGA3}}]\label{l2.0} The fibres of $f:S\to C$ are connected and $f_*\G_m=\G_m$.
\end{lemme}

\subsection{Reduction to the cohomology of the N\'eron  model}\label{redner}

\begin{lemme} \label{eq12.7} Let $\sJ=j_*J$ be the N\'eron  model of $J$ over $C$. There are short exact sequences
\begin{multline*}
0\to (\lim H^{p-1}(C,\sJ)/l^\nu)\otimes \Q\to H^p(C,j_*R^1f'_*\Q_l(1))\\
\to V_l(H^p(C,\sJ))\to 0.
\end{multline*}
\end{lemme}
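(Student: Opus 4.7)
The plan is to identify $j_*R^1f'_*\Z_l(1)$ with the Tate module sheaf of the Néron model $\sJ$, and then to extract the stated sequence from the usual Kummer-type $l^\nu$-torsion exact sequence on $C_\et$ applied to $\sJ$.

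\textbf{Step 1 (identification of the sheaf).} On the generic fibre, the Kummer sequence on $\bar\Gamma$ together with the self-duality of the Jacobian yields $R^1f'_*\mu_{l^\nu}\simeq J[l^\nu]$ as étale sheaves on $\Spec K$. Since $l$ is invertible in $k$, $J[l^\nu]$ is finite étale, so the Néron mapping property gives $j_*J[l^\nu]\simeq {}_{l^\nu}\sJ$. Passing to the inverse limit, and noting that $j_*$ commutes with limits of sheaves, we get
\[j_*R^1f'_*\Z_l(1) \simeq T_l(\sJ):=\lim_\nu {}_{l^\nu}\sJ,\qquad j_*R^1f'_*\Q_l(1)\simeq T_l(\sJ)\otimes\Q.\]

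\textbf{Step 2 (Kummer sequence on $C$).} Because $\sJ$ is smooth over $C$ and $l$ is invertible in $k$, multiplication by $l^\nu$ is an epimorphism of étale sheaves on $C$, giving
\[0\to {}_{l^\nu}\sJ\to \sJ\by{l^\nu}\sJ\to 0.\]
The long exact sequence in étale cohomology over $C$ splits into short exact sequences
\[0\to H^{p-1}(C,\sJ)/l^\nu\to H^p(C,{}_{l^\nu}\sJ)\to {}_{l^\nu}H^p(C,\sJ)\to 0\]
for every $p\ge 0$ and $\nu\ge 1$.

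\textbf{Step 3 (pass to the limit and tensor with $\Q$).} The transition maps in the inverse system $\{H^{p-1}(C,\sJ)/l^\nu\}_\nu$ are surjective, hence the system is Mittag-Leffler and $\lim{}^1=0$. Taking $\lim_\nu$ of the above short exact sequences, one obtains
\[0\to \lim_\nu H^{p-1}(C,\sJ)/l^\nu\to H^p(C,T_l(\sJ))\to T_l(H^p(C,\sJ))\to 0,\]
where the middle term coincides, by Step 1 and the definition of $l$-adic cohomology, with $H^p(C,j_*R^1f'_*\Z_l(1))$. Since tensoring with $\Q$ is exact and converts $T_l$ into $V_l$, the result follows.

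\textbf{Main obstacle.} The delicate point is Step 1: one must verify that the identification $R^1f'_*\mu_{l^\nu}\simeq J[l^\nu]$ extends, after applying $j_*$, to the clean identification with ${}_{l^\nu}\sJ$. This uses the Néron mapping property, the invertibility of $l$ in $k$ (so that $\sJ[l^\nu]$ is étale), and the fact that $j_*$ of a finite étale group scheme on $K$ agrees with the $l^\nu$-torsion of the Néron model. All the cohomological bookkeeping in Steps 2--3 is routine once this identification is in hand, the only technical check being the Mittag-Leffler condition that ensures $\lim{}^1$ vanishes so that the inverse-limit sequence remains short exact.
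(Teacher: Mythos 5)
Your proposal follows the same broad strategy as the paper (Kummer sequence on the N\'eron model, pass to the limit, tensor with $\Q$), but Step~2 contains a genuine gap. You assert that ``because $\sJ$ is smooth over $C$ and $l$ is invertible in $k$, multiplication by $l^\nu$ is an epimorphism of \'etale sheaves on $C$.'' Smoothness alone does not suffice: the N\'eron model has special fibres with nontrivial component groups $\Phi_c$, and multiplication by $l^\nu$ on $\sJ$ kills $l$-torsion in $\Phi_c$. Concretely, if $E/K$ has split multiplicative reduction at $c$ with $\Phi_c\simeq\Z/n$ and $l\mid n$, then on the strictly henselian stalk at $c$ one computes $\Coker\bigl([l]\colon\sE(\sO^{sh}_{C,\bar c})\to\sE(\sO^{sh}_{C,\bar c})\bigr)\neq 0$. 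So the sequence $0\to{}_{l^\nu}\sJ\to\sJ\by{l^\nu}\sJ\to 0$ you write down is \emph{not} exact on the right, and the short exact sequences in cohomology you derive from it do not hold as stated.

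The paper sidesteps this by first passing to the identity component $\sJ^0=\Ker\bigl(\sJ\to\bigoplus_c (i_c)_*\Phi_c\bigr)$. On $\sJ^0$, which has connected smooth fibres, multiplication by $l^\nu$ \emph{is} an \'etale surjection, so the Kummer sequence is legitimate there. One then compares $\sJ^0$ with $\sJ$: since $\sJ/\sJ^0$ is a finite skyscraper sheaf, the difference between their cohomologies (and their $l^\nu$-torsion subsheaves) is killed after taking $V_l$ or $(\lim -/l^\nu)\otimes\Q$, using also $V_l(\sJ^0)\iso V_l(\sJ)$ and $T_l(\Phi_c)=0$. To repair your argument you must insert this reduction to $\sJ^0$ before invoking Kummer; the remaining limit and Mittag--Leffler bookkeeping in Step~3 is then fine (and in fact in the paper's version the central terms $H^p(C,{}_{l^\nu}\sJ^0)$ are finite, which makes the $\lim^1$ vanishing automatic).
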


\begin{proof} Given $c\in C_{(0)}$, write $i_c:c\inj C$ for the corresponding closed immersion and let  $\Phi_c$ be the group of connected  components of the special  fibre of   $\sJ$ at $c$. Then $\Phi_c$ is finite for any $c$ and is $0$ except for a finite number of $c$'s.  Write $\sJ^0=\Ker(\sJ\to \bigoplus_{c\in C_{(0)}} (i_c)_* \Phi_c)$: it is divisible. Since $k$ is algebraically closed, we have isomorphisms $H^p(C,\sJ^0)\iso H^p(C,\sJ)$ for $p>0$ and an injection with finite cokernel  $H^0(C,\sJ^0)\inj H^0(C,\sJ)$. So,
\begin{gather*}(\lim H^*(C,\sJ^0)/l^\nu)\otimes \Q\iso (\lim H^*(C,\sJ)/l^\nu)\otimes \Q,\\
 V_l(H^*(C,\sJ_0))\iso V_l(H^*(C,\sJ)).
 \end{gather*}

To handle the cohomology of $\sJ^0$, we use the Kummer exact sequences
\[0\to{}_{l^\nu} \sJ^0\to \sJ^0\by{l^n}\sJ^0\to 0\]
which yield exact sequences with finite central terms
\[0\to H^{p-1}(C,\sJ^0)/l^\nu\to H^p(C,{}_{l^\nu} \sJ^0)\to {}_{l^\nu}H^p(C, \sJ^0)\to 0\]
hence other exact sequences
\[0\to \lim H^{p-1}(C,\sJ^0)/l^\nu)\otimes \Q\to H^p(C,V_l (\sJ^0))\to V_l(H^p(C, \sJ^0))\to 0.\]

But $V_l(\sJ^0)\iso V_l(\sJ)$; as $R^1f'_*\mu_{l^\nu}\iso {}_{l^\nu} J$ and 
$j_*{}_{l^\nu} J={}_{l^\nu}\sJ$, the lemma follows.
\end{proof}
\enlargethispage*{40pt}

\subsection{Cohomology of $B:=j_*\Pic_{\Gamma/K}$} Define
\[D=\Coker(\Div(C)\by{f^*}\Div(S-\Gamma))\]
(the right hand side may be thought of as ``vertical divisors of $S$ relative to $f$''). We may describe $D$ as follows: for $c\in C_{(0)}$, write  $f^*(c)$ as an effective divisor $\sum_{i\in I} n_i C_i$ on $S$, with the $C_i$'s irreducible and $n_i>0$. Let $D_c=\bigoplus_{i\in I}\Z$ and $\bar D_c=\Coker(\Z\by{f_c}D_c)$, where $f_c(1)=(n_i)_{i\in I}$: thus $\bar D_c=0$ whenever $f$ is smooth over $c$ (Lemma \ref{l2.0}). Then: 
\begin{equation}\label{eqD}
D=\bigoplus_{c\in C_{(0)}} \bar D_c
\end{equation}
(a finite sum).

\begin{lemme}\label{l2.1} There is an isogeny
\[\Pic^0_{S/k}/\Pic^0_{C/k}\to \Tr_{K/k} J \]
and a complex
\[0\to \NS(C)\to \NS(S)\to \Pic(\Gamma)/\Tr_{K/k} J(k)\to 0\]
which, modulo finite groups, is acyclic except at $\NS(S)$, where its homology 
is $D$.
\end{lemme}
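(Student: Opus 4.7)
The plan is to derive both assertions from the classical right-exact sequence of Picard groups
\[\bigoplus_{c\in C_{(0)}} D_c \xrightarrow{\alpha} \Pic(S) \xrightarrow{\beta} \Pic(\Gamma)\to 0,\]
reflecting that every divisor on $\Gamma$ extends by Zariski closure to $S$, uniquely modulo vertical divisors. Since $\alpha$ sends $\sum_i n_{c,i}[E_{c,i}]\in D_c$ to $f^*[c]$, it descends to $\bar\alpha\colon D\to\Pic(S)/f^*\Pic(C)$; this $\bar\alpha$ is injective because any principal vertical divisor on $S$ is the pullback of a principal divisor on $C$, which forces its coefficients in $D_c$ to be common multiples of the $n_{c,i}$. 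Thus we obtain an exact sequence
\[0\to D \xrightarrow{\bar\alpha} \Pic(S)/f^*\Pic(C) \xrightarrow{\bar\beta} \Pic(\Gamma)\to 0.\]

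For the isogeny, the generic-fibre restriction $\Pic^0_{S/k}\to J$ (as a $K$-morphism after base change) vanishes on the image of $f^*\colon\Pic^0_{C/k}\to\Pic^0_{S/k}$, since $f^*L|_\Gamma=0$ for $L\in\Pic(C)$. Hence it factors through $(\Pic^0_{S/k}/\Pic^0_{C/k})_K\to J$, and the universal property of the $K/k$-trace (\cite{conrad}) yields a $k$-morphism
\[\phi\colon\Pic^0_{S/k}/\Pic^0_{C/k}\to\Tr_{K/k}J.\]
To see $\phi$ is an isogeny, dualize: $(\Pic^0_{S/k}/\Pic^0_{C/k})^*$ is the connected component of $\Ker(\Alb(S)\to\Alb(C))$, which by the Albanese theory of fibrations is isogenous to the maximal $k$-descendable abelian subvariety of $\Alb(\Gamma)$, i.e., to $(\Tr_{K/k}J)^*$; applying duality back and using the compatibility of $\Tr_{K/k}$ with duality gives the dimension count making $\phi$ an isogeny.

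Passing to $\NS$, the isogeny $\phi$ implies $\IM(\Pic^0(S)\to J(K))=\Tr_{K/k}J(k)$ modulo finite index, so $\bar\beta$ descends, modulo finite groups, to a surjection $\NS(S)\to\Pic(\Gamma)/\Tr_{K/k}J(k)$. Injectivity of $\NS(C)\to\NS(S)$ is immediate, since $\NS(C)=\Z$ is generated by a point class pulling back to a fibre, which pairs non-trivially with any multisection of $f$. For the homology at $\NS(S)$: an element $[L]$ in the kernel satisfies $[L|_\Gamma]=[M|_\Gamma]$ for some $M\in\Pic^0(S)$, whence $L-M$ is vertical; modulo $\Pic^0(S)$ and modulo the fibre class $\IM(f^*\NS(C))$, this identifies the kernel with $D$ via $\bar\alpha$. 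I expect the main obstacle to be the surjectivity in the isogeny step: it ultimately relies on Chow's construction that every isotrivial abelian subvariety of $J$ descends from the Picard scheme of some proper regular model, as developed in \cite{conrad}.
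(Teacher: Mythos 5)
The paper itself proves this lemma by simply citing \cite[Prop.~3.3 et 3.8]{hp} and \cite[3.2~a)]{ln}, which are Shioda--Tate type results; your proposal reconstructs essentially the same content directly, so the underlying mathematics is the same and it is useful to have it written out. Your exact sequence $0 \to D \to \Pic(S)/f^*\Pic(C) \to \Pic(\Gamma) \to 0$ is correct (the injectivity of $\bar\alpha$ follows because a principal vertical divisor is $\operatorname{div}_S(g)$ with $g\in k(C)^*$, hence equals $f^*\operatorname{div}_C(g)$, a $\Z$-combination of fibres), and combining it with the filtration by $\Pic^0$ and the isogeny $\phi$ via a snake-lemma comparison against $0\to\Tr_{K/k}J(k)\to\Pic(\Gamma)\to\Pic(\Gamma)/\Tr_{K/k}J(k)\to 0$ gives exactly the statement, all modulo finite groups.

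The genuine gap is the one you flag: showing that $\phi\colon \Pic^0_{S/k}/\Pic^0_{C/k}\to\Tr_{K/k}J$ is an isogeny. Two problems with the dualization argument as written. First, the appeal to ``Albanese theory of fibrations'' to identify $\Ker(\Alb(S)\to\Alb(C))^0$ up to isogeny with the constant part of $\Alb(\Gamma)$ is not a reduction but a restatement of the claim---this \emph{is} the content of \cite[Prop.~3.3]{hp}. Second, even granting an abstract isogeny between $(\Pic^0_{S/k}/\Pic^0_{C/k})^*$ and $(\Tr_{K/k}J)^*$, a dimension count does not show the \emph{specific} morphism $\phi$ is an isogeny: a homomorphism between equidimensional abelian varieties can have positive-dimensional kernel (already $A\times A\to A\times A$, $(a,b)\mapsto(a,0)$). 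What must be proved is that the canonical map $\IM_{K/k}(\Alb(\Gamma))\to\Ker(\Alb(S)\to\Alb(C))^0$, or dually $\phi$ itself, is surjective up to isogeny. Finally, your suggestion to close this via Conrad \cite{conrad} is misdirected: that paper builds the $K/k$-trace and image formalism and proves Lang--N\'eron, but does not contain this fibration-Albanese statement; the references that do are precisely the ones the paper cites.
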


\begin{proof} This follows from \cite[Prop. 3.3 and 3.8]{hp} or \cite[3.2 a)]{ln}.
\end{proof}

\begin{lemme}\label{l2.1a} a) There is an exact sequence
\begin{equation}
0\to D\to \Pic(S/C)\to H^0(C,B)\to 0\label{eq12.2}
\end{equation}
where $\Pic(S/C)=H^0(C,\Pic_{S/C})$ and  $B = j_*\Pic_{\Gamma/K}$.\\
b) There is an exact sequence
\begin{gather}
0\to \Pic(C)\to \Pic(S)\to \Pic(S/C)\to 0\label{eq12.1}\\
\intertext{and isomorphisms}
H^n(S,\bG_m)\iso H^{n-1}(C,B) \text{ for } n>1.\label{eq12.4}
\end{gather}
In particular,
\begin{equation}
\Br(S)\iso H^1(C,B)\quad \label{eq12.3}
\end{equation}
and $H^n(C,B)=0$ for $n>3$.
\end{lemme}

\begin{proof} This follows from the computations in \cite[\S 4]{BrIII}. For simplicity, write $P=\Pic_{S/C}$. The homomorphism $\phi:P\to B$ is epi and its kernel is a skyscraper sheaf whose global  sections are $D$: indeed, epi follows from the fact that all residue fields $k(c)$ for $c\in C_{(0)}$ are perfect  (they are all equal to $k$) \cite[p. 114]{BrIII}, and $H^0(C,\Ker \phi)$ is identified to $D$ by the computation of \cite[p. 116, esp. (4.21)]{BrIII}. This yields a), as well as isomorphisms 
\begin{equation}\label{eq2.3}
H^n(C,P)\iso H^n(C,B), \quad  n>0.
\end{equation} 

To prove b), we use the long cohomology exact sequence from \cite[(4.1)]{BrIII} (a consequence of Lemma \ref{l2.0} and \emph{op. cit.}, (3.2)):
\begin{equation}\label{eq2.1}
\dots\to H^n(C,\bG_m)\to H^n(S,\bG_m)\to H^{n-1}(C,P)\to\dots
\end{equation}

For any smooth $k$-variety $Z$ of dimension $d$, one has $H^n(Z,\G_m)=0$ for $n>2d$ by \cite[Exp. X, Cor. 4.3 and 5.2]{SGA4}, the Kummer exact sequences
\[0\to \mu_m\to \G_m\by{m}\G_m\to 0, \quad m \text{ invertible in } k\]
(resp. $0\to \G_m\by{p}\G_m\to \G_m/p\to 0$ if $p=\car k$) and the fact that $H^n(Z,\G_m)$ is torsion for $n>1$ \cite[Prop. 1.4]{BrII}.
Thus $H^n(S,\bG_m)=0$ for $n>4$. Moreover,  $H^2(C,\bG_m)=0$ by \cite[Cor. 1.2]{BrIII} and \cite[Cor. 2.2]{BrII}. b) follows from these facts and \eqref{eq2.3}, \eqref{eq2.1}.
\end{proof}

\subsection{$l$-adic conversion}\label{s.conv} 

\begin{lemme}\label{l2.3} We have natural isomorphisms
\begin{equation}\label{eq2.10}
T_l(\Tr_{K/k} J)\iso  T_l(H^0(C,B))
\end{equation}
\begin{equation}\label{eq2.11}
 (\lim H^0(C,B)/l^\nu)\iso (\Pic(\Gamma)/(\Tr_{K/k} J)(k))\otimes\Z_l.
\end{equation}
\end{lemme}

\begin{proof} The isomorphism $H^0(C,B)=\Pic(\Gamma)$ gives a tautological exact sequence:
\begin{equation}\label{eq12.new}
0\to \Tr_{K/k} J(k)\to H^0(C,B)\to \Pic(\Gamma)/\Tr_{K/k} J(k)\to 0.
\end{equation}

The first term is divisible and, by Lemma \ref{l2.1}, the third term is finitely generated. Hence in the exact sequences derived from \eqref{eq12.new}:
\begin{multline}\label{eq12.new2}
0\to {}_{l^\nu} \Tr_{K/k} J(k)\to {}_{l^\nu} H^0(C,B)\to {}_{l^\nu} \Pic(\Gamma)/\Tr_{K/k} J(k)\\
\to \Tr_{K/k} J(k)/l^\nu\to H^0(C,B)/l^\nu\to (\Pic(\Gamma)/\Tr_{K/k} J(k))/l^\nu\to 0
\end{multline}
we have $\Tr_{K/k} J(k)/l^\nu=0$ and $({}_{l^\nu} \Pic(\Gamma)/\Tr_{K/k} J(k))$, viewed as an inverse system, is Mittag-Leffler  null. Whence the lemma by passing to the inverse limit.
\end{proof}

\subsection{From  $B$ to  $\sJ$} To pass from  $B$ to  $\sJ$,  we work in the abelian category $\sC$ of abelian groups modulo the Serre subcategory of  finite groups, which does not affect the functor $V_l$. 

\begin{lemme}\label{l2.2} In $\sC$, we have
\begin{enumerate}
\item A split exact sequence $0\to H^0(C,\sJ)\to H^0(C,B)\to \Z\to 0$.
\item An isomorphism $\Sha(J,K/k)=H^1(C,\sJ)\simeq \Br(S)$.
\item An isomorphism $H^2(C,\sJ)\{l\}\simeq \IM_{K/k} J\{l\}(-1)$; $H^2(C,\sJ)$ is torsion.
\item $H^p(C,\sJ)\{l\}=0$ for $p\ge 3$.
\end{enumerate}
\end{lemme}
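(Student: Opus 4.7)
The plan is to compare the cohomology of $\sJ$ with that of $B$, already computed in the previous lemma, by means of the degree sequence on $\eta$. On $\Spec K$ one has the tautological exact sequence
\[0\to J\to \Pic_{\Gamma/K}\to \Z\to 0\]
(degree of line bundles on a smooth projective geometrically connected curve). I would push this forward by $j$: by the defining property of the Néron model one has $j_*J=\sJ$, whence a four-term exact sequence
\[0\to \sJ\to B\to j_*\Z=\Z_C\to R^1j_*J.\]
A stalk-wise computation at a closed point $c\in C$, using the henselization of $\sO_{C,c}$, shows that the image of $B\to \Z_C$ is the subsheaf $n_c\Z$ at $c$, where $n_c$ is the g.c.d.\ of the multiplicities of the components of $f^{-1}(c)$. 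In particular the cokernel of $B\to \Z_C$ is a finite skyscraper sheaf supported on the multiple fibres, so in $\sC$ we obtain a short exact sequence
\[0\to \sJ\to B\to \Z_C\to 0.\]

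Next I would assemble the long exact sequence in cohomology on $C$. Two standard inputs are used: on the one hand, since $\pi_1^{\et}(C)$ is profinite while $\Z$ is discrete and torsion-free, one has $H^0(C,\Z)=\Z$, $H^1(C,\Z)=0$, and $H^p(C,\Z)$ is torsion for $p\ge 2$ (Bockstein from $0\to \Z\to \Z\to \Z/n\to 0$ combined with vanishing of $H^{>0}(C,\Q)$ in~$\sC$). On the other hand, the previous lemma gives $H^0(C,B)=\Pic(S/C)/D$, $H^1(C,B)=\Br(S)$, and $H^p(C,B)=H^{p+1}(S,\bG_m)$ for $p\ge 2$, vanishing for $p>3$. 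Feeding these into the long exact sequence of the displayed short exact sequence produces the skeleton of items (1)--(4).

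To extract (1), I would check that $H^0(C,B)\to \Z$ is surjective in $\sC$: this follows because $\Gamma$ admits a divisor of positive degree after finite base change, so the image is a non-zero subgroup of $\Z$, hence finite-index; this forces the connecting map $\Z\to H^1(C,\sJ)$ to vanish, and the resulting sequence splits because $\Z$ is projective. For (2), the vanishing of this connecting map combined with $H^1(C,\Z_C)=0$ identifies $H^1(C,\sJ)$ with $H^1(C,B)=\Br(S)$ via \eqref{eq12.3}. For (4), vanishing in degrees $\ge 3$ reduces, via the long exact sequence, to the vanishing of $H^p(C,B)$ in degrees $\ge 4$ and to negligibility (in $\sC$) of $H^p(C,\Z_C)$ in higher degrees.

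The main obstacle is (3). Injectivity $H^2(C,\sJ)\hookrightarrow H^2(C,B)$ is immediate from $H^1(C,\Z_C)=0$, but identifying $H^2(C,\sJ)\{l\}$ with $\IM_{K/k}J\{l\}(-1)$ requires a Poincaré-duality argument on $C$ with constructible $l$-adic coefficients: dualizing $j_*J[l^n]$ yields $j_*J^*[l^n](1)$ (via Weil pairing on the generic fibre, extended by $j_*$), and Poincaré duality on the smooth projective curve $C$ identifies $H^2(C,j_*J[l^n])$ with $H^0(C,j_*J^*[l^n])^\vee(-1)$. The latter $H^0$ is, modulo finite groups, $T_l(\Tr_{K/k}J^*)$ (Lang--Néron), and the canonical identification $\IM_{K/k}J\simeq (\Tr_{K/k}J^*)^*$ turns this dual into $\IM_{K/k}J\{l\}(-1)$, giving (3) and in particular its torsionness. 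Tracking the Tate twist and the passage between $T_l$ and $\{l\}$-torsion through this duality chain is the delicate step of the proof.
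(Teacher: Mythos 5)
Your construction of the short exact sequence $0\to\sJ\to B\to\Z\to 0$ in $\sC$ agrees with the paper's \eqref{eq12.6}, but you overlook the key simplification the paper extracts from it: it is observed to be \emph{split} in $\sC$, which makes all connecting homomorphisms vanish modulo finite groups and yields the split short exact sequences
\[0\to H^p(C,\sJ)\to H^p(C,B)\to H^p(C,\Z)\to 0\]
for \emph{every} $p$, without any case-by-case checking. Because you forgo splitness, you must control each connecting map individually, and this is where your argument for item (4) develops a genuine gap. You reduce (4) to ``vanishing of $H^p(C,B)$ in degrees $\ge 4$'' together with ``negligibility of $H^p(C,\Z)$ in higher degrees'', but neither ingredient suffices for $p=3$ or $p=4$: indeed $H^3(C,B)\simeq H^4(S,\bG_m)$ is nonzero ($\simeq\Q_l/\Z_l$ on $l$-primary parts), and $H^2(C,\Z)\{l\}\simeq H^1(C,\Q_l/\Z_l)$ and $H^3(C,\Z)\{l\}\simeq H^2(C,\Q_l/\Z_l)$ are both infinite. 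The paper kills all of these by identifying the map $\tau\colon H^{p+1}(S,\bG_m)\to H^p(C,\Z)$, after $l$-adic conversion, with the trace morphism dual to $f^*\colon H^{p-1}(C,\Z_l(1))\to H^{p-1}(S,\Z_l(1))$, and proving it is an isomorphism precisely for $p\ge 3$. You would need to supply this (or an equivalent surjectivity and injectivity statement) to close the gap.

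For item (3), your route is genuinely different from the paper's and, once carefully unwound, does work. The paper stays with the degree sequence: $H^2(C,\sJ)$ sits inside $H^3(S,\bG_m)$ with quotient $H^2(C,\Z)$, and the kernel of $\tau$ is recognized, via Poincar\'e duality on $S$ and on $C$ separately with constant coefficients, as $\Ker(\Alb(S)\{l\}\to\Alb(C)\{l\})(-1)=\IM_{K/k}J\{l\}(-1)$. You instead dualize directly on $C$ with constructible coefficients $j_*({}_{l^n}J)$. The correct statement here is not a naive Poincar\'e duality for an arbitrary constructible sheaf; rather, writing $U\subset C$ for the locus where ${}_{l^n}J$ extends to a lisse sheaf, one first notes $H^2(C,j_*{}_{l^n}J)\iso H^2_c(U,{}_{l^n}J)$ (the difference is governed by cohomology of a zero-dimensional scheme), then applies Poincar\'e duality on $U$ and the Weil pairing $({}_{l^n}J)^D\simeq{}_{l^n}J'$, and finally uses $H^0(U,{}_{l^n}J')=H^0(C,j_*{}_{l^n}J')$. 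Spelled out this way, the route avoids the subtlety you rightly flag (the need to compare $({}_{l^n}J)^{I_c}$ with $({}_{l^n}J)_{I_c}$); but as written, ``Poincar\'e duality on the smooth projective curve $C$ identifies $H^2(C,j_*J[l^n])$ with $H^0(C,j_*J^*[l^n])^\vee(-1)$'' is too quick and the Tate twist bookkeeping should be checked against the Kummer-sequence passage from ${}_{l^n}$-coefficients to $\{l\}$-divisible groups. In sum: items (1) and (2) are fine; item (3) works by a genuinely different (and perfectly reasonable) mechanism once formulated via $H^2_c(U,-)$; item (4) is incomplete and needs the trace-morphism isomorphism.
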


\begin{proof} (1) and (2) are contained in \cite[\S 4]{BrIII}; we include their proof for completeness. (For the first isomorphism of (2), see \cite[(4.44) and (4.45)]{BrIII}.)

In the exact sequence $0\to J\to \Pic_{\Gamma/K}\by{\deg} \Z$, the map $\deg$ is split up to an integer by the choice of a closed point of $\Gamma$. Applying $j_*$, this yields an exact sequence, split in $\sC$
\begin{equation}\label{eq12.6}
0\to \sJ\to B\to\Z\to 0
\end{equation}
which gives (still in $\sC$) split exact sequences
\[0\to H^p(C,\sJ)\to H^p(C,B)\to H^p(C,\Z)\to 0.\]
  
For $p=0$, we get (1). For $p=1$, we get (2) in view of $H^1(C,\Z)=0$ and \eqref{eq12.3}. 
The group $H^p(C,\sJ)$ is torsion for $p>0$ (compare \cite[IX.4.2]{SGA4}), and $cd_l(C)=2$ (loc. cit., 4.6); the structure of $\sJ$ recalled in  \S \ref{redner} then yields (4). 

It remains to prove (3). Using (2) and the fact that $\Br(S)\{l\}$ is a group of cofinite type  \cite[3.2, 3.3]{BrII}, we get surjective maps $H^2(C,{}_{l^\nu}\sJ)\allowbreak \surj {}_{l^\nu} H^2(C,\sJ)$ with bounded kernels. The Weil pairings
\[{}_{l^\nu} J\times {}_{l^\nu}J\to \mu_{l^\nu}\]
and Poincar\'e duality then show that $H^2(C,\sJ)\{l\}$ is Pontrjagin dual in $\sC$ to $H^0(C,T_l(\sJ))\simeq T_l(H^0(C,\sJ))$, and we conclude by (1) and \eqref{eq2.10}.
\end{proof}

\subsection{Conclusion} From Lemmas \ref{l2.3} and \ref{l2.2}, we derive
\[V_l(\Tr_{K/k} J)\iso  V_l(H^0(C,\sJ))\text{ (Lemma \ref{l2.2} (1) and \eqref{eq2.10})}\]
\[V_l(H^1(C,\sJ))\simeq V_l(\Br(S))\simeq H^2_\tr(S,\Q_l(1)) \text{ (Lemma \ref{l2.2} (2) and \eqref{eq:kummer})}\]
\[\LN(J,K/k)\otimes\Q_l\iso (\lim H^0(C,\sJ)/l^\nu)\otimes\Q\text{ (Lemma \ref{l2.2} (1) and \eqref{eq2.11})}\]
\[(\lim H^1(C,\sJ)/l^\nu)\otimes\Q= 0  \text{ (Lemma \ref{l2.2} (2))}\]
\[V_l(H^2(C,\sJ))\simeq V_l(\IM_{K/k} J)(-1)\simeq V_l(\Tr_{K/k} J)(-1) \text{ (Lemma \ref{l2.2} (3))}\] 
\[(\lim H^2(C,\sJ)/l^\nu)\otimes\Q= 0 \text{ (Lemma \ref{l2.2} (3))}\]
\[V_l(H^p(C,\sJ))= (\lim H^p(C,\sJ)/l^\nu)\otimes\Q= 0\text{ for } p>2 \text{ (Lemma \ref{l2.2} (4))}\]
whence finally the isomorphisms and the exact sequence of Theorem 
\ref{t12.1}, using Lemma \ref{eq12.7} and the isomorphisms
\[V_l(A)(-1)\simeq V_l(A)^*\simeq H^1(A,\Q_l)\]
valid for any abelian variety $A$ over an algebraically closed field.\qed

\section{Proofs of Corollary \ref{c1.1} and Theorem \ref{t0.2}}\label{s:proof2}

\enlargethispage*{20pt}

\subsection{The refined Chow-K\"unneth decomposition for a surface} Let $k$ be an arbitrary field, and $S$ a smooth projective $k$-surface. Recall that, by Murre and Scholl \cite[\S 4]{scholl}, there exists a decomposition in $CH^2(S\times S)\otimes \Q$
\begin{equation}\label{eq4.1}
\Delta_S = \sum_{i=0}^4 \pi^i
\end{equation}
where the $\pi^i$'s are orthogonal idempotents lifting the K\"unneth projectors relative to any Weil cohomology theory $H$ such that $H^1(\Alb(S))\allowbreak\by{f^*} H^1(S)$ is an isomorphism for an Albanese map $S\by{f} \Alb(S)$. Recall that any ``classical'' Weil cohomology theory in the sense of \cite[Def. 7.1.4]{kmp} has this property: this follows for $l$-adic cohomology from \cite[Prop. 4.11]{milne2}, for crystalline cohomology from \cite[Rem. II.3.11.2]{DRW} and for the other classical cohomologies in characteristic $0$ from the comparison theorems with $l$-adic cohomology. In \cite[Prop. 7.2.3]{kmp}, \eqref{eq4.1} was refined to
\[\pi^2=\pi^2_\alg+\pi^2_\tr\]
where $\pi^2_\alg$ and $\pi^2_\tr$ are orthogonal projectors, which cut off the ``algebraic'' and ``transcendental'' part of $H^2(S)$, e.g. $\NS(\bar S)\otimes \Q_l(-1)$ and $H^2_\tr(\bar S,\Q_l)$ from $H^2_\et(\bar S,\Q_l)$. 

Write $\sM(k)=\sM$ for the category of Chow motives over $k$ with rational coefficients \cite{scholl}. The above yields a \emph{refined Chow-K\"unneth decomposition} of the motive $h(S)\in \sM$:
\begin{equation}\label{eq4.2}
h(S) = h^0(S)\oplus h^1(S)\oplus h^2_\alg(S)\oplus t^2(S)\oplus h^3(S)\oplus h^4(S).
\end{equation}

Supposing $S$ geometrically connected, we have $h^0(S)\simeq \un$ (unit motive), $h^3(S)\simeq h^1(S)\otimes \L$ ($\L$ the Lefschetz motive), $h^4(S)\simeq \L^2$ and  $h^2_\alg(S)\simeq \NS_S\otimes \L$, where $\NS_S$ is the Artin motive corresponding to $\NS(\bar S)$ viewed as a $G_k$-module.

Recall also that any abelian variety $A$ has a Chow-K\"unneth decomposition by \cite{den-murre}; in particular one has the motive $h^1(A)\in \sM$, a direct summand of $h(A)$.

\subsection{Correspondences at the generic point} If $X$ is a smooth projective variety of dimension $d$ over a field $F$, we write $CH^d_\equiv(X\times_F X)$
for the quotient of the ring of Chow correspondences on $X$ by the ideal generated by those $Z\subset X\times X$ such that $p_1(Z)\ne X$ or $p_2(Z)\ne X$, where $p_1,p_2$ are the two projections $X\times X\to X$ (\emph{cf.} \cite[ex. 16.1.2 (b)]{fulton}.)

\begin{prop}\label{p3.1} a) In the situation of \eqref{eq12.0}, there is a ring isomorphism $CH^1_\equiv(\Gamma\times_K \Gamma)\iso \End_K(J)$, and a ring homomorphism
\[r:CH^1_\equiv(\Gamma\times_K \Gamma)\to CH^2_\equiv(S\times_k S).\]
b) The rings $\End_K(J)\otimes \Q$ and $CH^2_\equiv(S\times_k S)\otimes \Q$ act compatibly on the isomorphisms and the exact sequence of Theorem \ref{t12.1}, as well as on \eqref{eq:kummer}.
\end{prop}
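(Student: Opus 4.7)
My plan is to prove (a) by a classical identification and an explicit geometric construction, and to prove (b) by tracing through the proof of Theorem~\ref{t12.1} and verifying that every intermediate identification is equivariant under the correspondence actions.

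For the isomorphism $CH^1_\equiv(\Gamma\times_K\Gamma)\iso \End_K(J)$ in (a), I would send a class $[Z]$ to the endomorphism $[D]\mapsto p_{2*}(Z\cdot p_1^*D)$ of $J=\Pic^0_{\Gamma/K}$. Cycles supported on the degenerate loci $\{pt\}\times\Gamma$ or $\Gamma\times\{pt\}$ induce the zero endomorphism by the projection formula, so the assignment is well defined, and surjectivity follows from the universal property of $J$ as the Picard/Albanese variety of $\Gamma$. To construct $r$, I take for each integral cycle $Z\subset \Gamma\times_K\Gamma$ the Zariski closure $\bar Z$ in $S\times_k S$; since $\Gamma\times_K\Gamma$ is the open generic fibre of $S\times_C S\to C$ (which itself has codimension one in $S\times_k S$), $\bar Z$ lands in $S\times_C S$ and has codimension two in $S\times_k S$. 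A degenerate $Z$ yields a $\bar Z$ whose projection to one $S$-factor is not surjective, so $r$ is well defined modulo $\equiv$. Compatibility with composition follows because the pullbacks and intersection computing $\bar Z_1\circ \bar Z_2$ take place within $S\times_C S\times_C S$, whose generic fibre over $C$ is $\Gamma\times_K\Gamma\times_K\Gamma$; the components of $\bar Z_1\circ\bar Z_2$ that dominate $C$ recover $r(Z_1\circ Z_2)$, while components lying over proper closed subsets of $C$ vanish in $CH^2_\equiv$.

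For (b), the guiding principle is that both ring actions factor through correspondences on the generic fibre: the action of $\End_K(J)\otimes\Q$ is by definition the action of $CH^1_\equiv(\Gamma\times_K\Gamma)\otimes\Q$ on $H^1(\Gamma_{\bar K},\Q_l)$, while the action of $CH^2_\equiv(S\times_k S)\otimes\Q$ on $H^*(S,\Q_l)$ preserves the Leray filtration for $f$ and descends to the associated gradeds $H^*(C,R^*f_*\Q_l)$, inducing over the smooth locus of $f$ the same action via $r$. Given this, I would trace through Section~\ref{s:proof1} and verify equivariance of each step: the Kummer sequences of Lemma~\ref{eq12.7} are functorial under the generic-fibre correspondence action on the sheaves ${}_{l^\nu}\sJ^0$; the exact sequences \eqref{eq12.2}, \eqref{eq12.1} and the isomorphism \eqref{eq12.4} are natural for the action of $CH^2_\equiv(S\times_k S)$ on the $\bG_m$-cohomology of $S$, pulled back from that of $C$; the extension \eqref{eq12.6} respects the action because the constant quotient sheaf $\Z$ is acted upon through the degree of the correspondence. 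The isomorphism \eqref{eq:kummer} is equivariant by functoriality of Kummer sequences, and \eqref{eq:BrIII} is equivariant because the identification in \cite[pp.~120/121]{BrIII} arises from the same Leray-spectral-sequence calculation.

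The main obstacle I anticipate lies in Lemma~\ref{l2.2}(3), where $H^2(C,\sJ)\{l\}$ is identified with $\IM_{K/k}J\{l\}(-1)$ via Poincar\'e duality and the dual of $f^*\colon H^1(C,\Z_l(1))\to H^1(S,\Z_l(1))$: here I must show that the image-variety functor $\IM_{K/k}$ is natural under those $\End_K(J)$-endomorphisms that lift via $r$ to correspondences on $S\times_k S$. This ultimately reduces to part~(a) together with the standard bookkeeping for the trace morphism $\tau$ in the proof of Lemma~\ref{l2.2}, but is where one must be the most careful.
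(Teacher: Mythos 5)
Your proposal follows the same route as the paper: part (a) appeals to the classical Weil identification $CH^1_\equiv(\Gamma\times_K\Gamma)\iso\End_K(J)$ and constructs $r$ by taking closures in $S\times_C S$ and pushing into $S\times_k S$, with compatibility with composition checked inside $S\times_C S\times_C S$; part (b) is then a functoriality chase through the proof of Theorem~\ref{t12.1}, which the paper itself dismisses as ``a long but eventless verification.'' You in fact give substantially more structure for part (b) than the paper does, and correctly locate the one place that needs real care (Lemma~\ref{l2.2}(3)). One small imprecision to fix: it is not true that an arbitrary class in $CH^2_\equiv(S\times_k S)$ preserves the Leray filtration of $f$; only those correspondences supported on $S\times_C S$ do, which is exactly the image of $r$. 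Since Proposition~\ref{p3.1}(b) only asserts a \emph{compatible} action of $\End_K(J)\otimes\Q$ and $CH^2_\equiv(S\times_k S)\otimes\Q$ intertwined by $r$, this is harmless, but you should phrase the ``guiding principle'' accordingly — the Leray-equivariance you use is a property of $r(\pi)$, not of a general correspondence on $S\times_k S$.
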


\begin{proof} (See \cite[Th. 9.3 b)]{adjoints} for a more conceptual proof.) 
 a) The first isomorphism is due to Weil \cite[ch. 6, th. 22]{weil}\footnote{The homomorphism is constructed in \cite[ex. 16.1.2 (c)]{fulton}, but its bijectivity is not mentioned\dots}. We have a homomorphism
\[R:Z^1(C\times_K C)\to Z^2(S\times_k S)\]
defined as follows: let $Z\subset \Gamma\times_K \Gamma$ be an irreducible  cycle of codimension $1$. Write $\sZ$ for its closure in $S\times_C S$. We set $R(Z)=$ image of $\sZ$ in $Z^2(S\times_k S)$. One checks that $R$ passes to rational equivalence and to the equivalences $\equiv$, and that the induced map $r$ is compatible with composition of correspondances.

b) This is a long but eventless verification.
\end{proof}


\begin{prop}\label{p.sha}   Let $A$ be an abelian variety over $K$. There exists an effective Chow motive $\sha(A,K/k)\in \sM(k)$ such that $R_l(\sha(A,K/k))=V_l(\Sha(A,K\bar k))(-1)$, for any prime $l\ne \car k$; $\sha(A,K/k)$ is a direct summand of $t^2(S)$ for some smooth projective $k$-surface $S$. Here $R_l:\sM\to \Rep_{\Q_l}^*(G_k)$ denotes the $l$-adic realisation, with values in finite dimensional graded $\Q_l$-vector spaces with continuous $G_k$-action.
\end{prop}

\begin{proof} If $k$ is perfect, write $A$ as a direct summand of a Jacobian $J$, up to isogeny, where $J$ comes from a situation \eqref{eq12.0}. \emph{Via} Proposition \ref{p3.1}, the corresponding  projector $\pi\in \End(J)\otimes\Q$ defines a projector $r(\pi)\in CH^2_\equiv(S\times_k S)\otimes\Q = \End(t^2(S))$ \cite[Th. 7.4.3]{kmp}. Define $\sha(A,K/k)$ as the image of $r(\pi)$. If $k$ is imperfect, we get a situation \eqref{eq12.0} over some finite purely inseparable extension of $k$, and we conclude by using a Frobenius trick as in \cite[Prop. 1.7.2]{birat-pure}.
\end{proof}

\begin{rque} We show in \cite[Cor. 8.4]{adjoints} that the motive $\sha(A,K/k)$ is independent of the choice of $J$, and is \emph{functorial in $A$}.
\end{rque}

\enlargethispage*{30pt}

\subsection{Proofs of Corollary \ref{c1.1} and Theorem \ref{t0.2}} To prove Corollary \ref{c1.1}, write $A$ as a direct summand up to isogeny of the Jacobian $J$ of a  curve $\Gamma$ as in Theorem \ref{t12.1}: $A$ corresponds to a projector $\pi\in \End_K(J)\otimes\Q$. Proposition \ref{p3.1} shows that $\pi$ acts on the isomorphisms and the exact sequence of Theorem \ref{t12.1}. Thus, Corollary \ref{c1.1} is obtained as a ``direct summand" of Theorem \ref{t12.1}.

For completeness, recall:

\begin{prop}\label{p3.2} Let  $M\in \sM=\sM(\F_q)$. Suppose that there is a smooth projective $\F_q$-variety $X$ whose Chow motive $h(X)$ admits a Chow-K\"unneth decomposition relative to $l$-adic cohomology for some $l\nmid q$, and such that $M$ is a direct summand of $h^i(X)$ for some $i\ge 0$. Then $Z(M,t)=P_M(t)^{(-1)^{i+1}}$, where $P_M$ is the inverse characteristic polynomial of the action of the Frobenius endomorphism $\pi_M$ on $H^i_l(M)$. Moreover, $P_M\in \Z[t]$ and its inverse roots are Weil $q$-numbers of weight $i$.
\end{prop}

\begin{proof} The definition from \cite[p. 81]{kleiman} amounts to $Z(M,t)=\break \exp(\sum_{i\ge 1} \tr(\pi_M^n)\frac{t^n}{n})$, where $\tr(\pi^n)\in \Q$ is computed in the rigid $\otimes$-category $\sM$. Then $\tr(\pi_M^n)=\tr(R_l(\pi_M)^n)$. Hence the first statement, since $H^j(M)=0$ for $j\ne i$ by hypothesis. The second one follows from \cite{weilI}, since $P_M$ obviously divides $\det(1-\pi_Xt\mid H^i_l(X))$.
\end{proof}

\begin{rque}\label{r3.1} Using \cite{KM},  one can show that the conclusion of  Proposition \ref{p3.2} holds if one only assumes that $M$ is effective and that $H^j_l(M)\ne 0$ for $j\ne i$ (one may even use crystalline cohomology instead of $l$-adic cohomology). But \cite{KM} relies on \cite{weilII}, hence this result is less elementary than the one in Proposition \ref{p3.2}.
\end{rque}

In view of Theorem \ref{t0.1}, Theorem \ref{t0.2} immediately follows from \eqref{eq:groth}, Corollary \ref{c1.1}, Proposition \ref{p.sha} and Proposition \ref{p3.2}.

\section{Comparing classes in $K_0$; proof of Theorem \ref{c4.1}} \label{s:comp} 

 We come back again to the situation of \eqref{eq12.0}, $k$ being arbitrary. One might want to compare
\[R(pf)_*\Q_l\]
and
\[Rp_*j_*Rf'_*\Q_l\]
in the derived category of $\Rep_{\Q_l}^*(G_k)$ (cf. Proposition \ref{p.sha}). Unfortunately this has no meaning, because $j_*$ 
has no meaning in this derived category.

On the other hand, let $\K_l$ be the Grothendieck group of the abelian category  $\Rep_{\Q_l}^*(G_k)$. We may consider in $\K_l$:

\begin{enumerate}
\item $H_l=[H^*_l(\bar S)]$, the alternating sum of the $l$-adic  cohomology groups  of $S$;
\item $H'_l=[R^*p_*j_*R^*f'_*\Q_l]$ (9 terms). 
\end{enumerate}

We may also consider $D$ 
as a discrete topological $G_k$-module.

\begin{thm}\label{t12.2} $H_l-H'_l=[D\otimes\Q_l(-1)]$.
\end{thm}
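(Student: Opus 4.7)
The plan is to compute $H_l$ and $H'_l$ separately in $\K_l \otimes \Q$, expressing both in terms of the common building blocks $[\Q_l]$ (with its Tate twists), $[H^1(C_s,\Q_l)]$, $[H^1(B_{k_s},\Q_l)]$ (where $B = \Tr_{K/k} J$), $[\LN(J,K/k)\otimes\Q_l]$, $[V_l(\Sha(J,Kk_s))]$, and $[D\otimes\Q_l]$, and then subtracting. All identifications are canonical (hence Galois-equivariant) and hold modulo finite groups, which is harmless in $\K_l \otimes \Q$.

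For $H'_l$: since $\Gamma/K$ is smooth, proper and geometrically connected, the rows $q=0$ and $q=2$ amount to the standard cohomology of the curve $C_s$ with constant coefficients $\Q_l$ and $\Q_l(-1)$ respectively. The middle row $q=1$ is precisely what Theorem \ref{t12.1} computes; after undoing its $(1)$-twist, it expresses the three groups $H^p(C_s, j_*R^1f'_*\Q_l)$ in terms of $H^1(B_{k_s},\Q_l)$, $\LN(J,K/k)\otimes\Q_l$ and $V_l(\Sha(J,Kk_s))$ at suitable Tate twists.

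For $H_l$: decompose $[H^*(S_s,\Q_l)]$ degree by degree. Degrees $0$ and $4$ give $[\Q_l] + [\Q_l(-2)]$. For $H^1$, the isogeny of Lemma \ref{l2.1} yields $[H^1(S_s,\Q_l)] = [H^1(C_s,\Q_l)] + [H^1(B_{k_s},\Q_l)]$; Poincar\'e duality gives the corresponding statement for $H^3$ with a Tate twist $(-1)$. For $H^2$, use the Kummer splitting $H^2(S_s,\Q_l) = \NS(S_s)\otimes\Q_l(-1) \oplus H^2_\tr(S_s,\Q_l)$; the complex of Lemma \ref{l2.1} gives $[\NS(S_s)\otimes\Q_l] = 2[\Q_l] + [\LN(J,K/k)\otimes\Q_l] + [D\otimes\Q_l]$ (using $\NS(C_s)\otimes\Q_l \simeq [\Q_l]$ and the filtration of $\Pic(\Gamma_{Kk_s})/(\Tr_{K/k}J)(k_s)$ by degree), while \eqref{eq:BrIII} identifies $[H^2_\tr(S_s,\Q_l)] = [V_l(\Sha(J,Kk_s))(-1)]$.

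Subtracting $H'_l$ from $H_l$, every building block cancels pairwise except $[D\otimes\Q_l(-1)]$, which enters only through the $\NS(S_s)\otimes\Q_l(-1)$ summand of $H^2(S_s,\Q_l)$ and has no counterpart on the $H'_l$ side. The main obstacle is the Tate-twist bookkeeping, most delicately the identification $[H^3(S_s,\Q_l)] = [H^1(S_s,\Q_l)](-1)$: this rests on the self-duality up to isogeny of the polarized abelian varieties $\Pic^0(C_s)$ and $B = \Tr_{K/k} J$, the latter inheriting a polarization from the principal polarization of the Jacobian $J$. Galois-equivariance is automatic since every identification used is canonical.
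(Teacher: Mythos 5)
Your proof is correct and essentially reproduces the paper's argument: both proceed by decomposing $H_l$ and $H'_l$ degree by degree using Theorem \ref{t12.1} and Lemma \ref{l2.1}, then cancelling. The only real difference is that you compute directly in $\K_l$ (your passage to $\K_l\otimes\Q$ is unnecessary overcaution, since tensoring with $\Q_l$ already kills all the finite groups) whereas the paper lifts each building block to the Grothendieck group $\K$ of Chow motives via the realization map $R_l:\K\to\K_l$ and proves the refined identity $h-h'=[D(-1)]$ in $\K$; that extra step is not needed for Theorem \ref{t12.2} itself, but gives the more precise motivic identity stated in the remark that follows and is what the paper feeds into Theorem \ref{c4.1}.
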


\begin{proof} For simplicity, put $B:=\Tr_{K/k} J$. Let $\K$ be the Grothendieck group of the additive category $\sM$, and let 
$R_l:\K\to \K_l$ be the homomorphism given by $l$-adic  realisation. (To avoid any confusion, we adopt cohomological notation as in \cite{kleiman,scholl}, but contrary to \cite{kmp}.) Then $H_l = R_l(h)$, with
$h=h(S)$. Similarly, there exists a canonical $h'\in \K$  such that $H'_l=R_l(h')$. Indeed,  Theorem 
\ref{t12.1} shows that
\begin{gather*}
[p_*j_*R^1f'_*\Q_l] = R_l([h^1(B)]),\quad [R^2p_*j_*R^1f'_*\Q_l] = R_l([h^1(B)\otimes \L]),\\
[R^1p_*j_*R^1f'_*\Q_l] = R_l([t^2(S)]+[\ln(J,K/k)\otimes \L])
\end{gather*}
where $\ln(J,K/k)$ is the Artin motive associated to the Galois module $\LN(J,K\bar k/\bar k)$. (Recall that $R_l(\L)=\Q_l(-1)$.)

Using $f'_*\Q_l = \Q_l$ and $R^2f'_*\Q_l=\Q_l(-1)$, we similarly get
\[
[R^qp_*j_*f'_*\Q_l] = R_l([h^q(C)]),\quad [R^qp_*j_*R^2f'_*\Q_l] = R_l([h^q(C)\otimes \L]).
\]

We may then take
\begin{multline*}h' = \sum_{q=0}^2 (-1)^q[h^q(C)] + \sum_{q=0}^2 (-1)^q[h^q(C)\otimes \L]\\
-\left([h^1(B)]
+[h^1(B)\otimes \L]-([t^2(S)]+[\ln(J,K/k)\otimes \L])\right).
\end{multline*}

To prove Theorem \ref{t12.2}, it therefore suffices to show:
\begin{equation}\label{eq12.8} h-h'= [D\otimes \L].
\end{equation}

From Lemma \ref{l2.1}, one gets identities in $\K$:
\[[h^1(S)]=[h^1(C)]+ [h^1(B)],\quad [h^3(S)]=[h^1(C)\otimes \L]+ [h^1(B)\otimes \L],\]
\begin{equation}\label{eq4.3}
[\NS_S]=[\NS_C] + [\un] + [\ln(J,K/k)] + [D]= 2[\un]+  [\ln(J,K/k)] + [D]
\end{equation}
from which \eqref{eq12.8} easily follows.
\end{proof}

Theorem \ref{c4.1} a) immediately follows from Theorem \ref{t12.2}. For b), we note that all terms appearing in \eqref{eq4.2} are birational invariants, except for $h^2_\alg(S)=\NS_S\otimes \L$. Using \eqref{eq4.3} again, we get from \eqref{eq12.8} the identity
\[h-[\NS_S\otimes \L]=h'- 2[\L]-[\ln(J,K/k)\otimes \L]\] 
so that the right hand side only depends on $k(S)=K(\Gamma)$.

\section{The crystalline realisation}\label{crys}
\enlargethispage*{30pt}

This section prepares the proof of Corollary \ref{ckt}, which will be given in the next section. Here $k$ is any perfect field of characteristic $p>0$.

\subsection{Isocrystals} We rely here on the crystal-clear exposition of Saavedra \cite[Ch. VI, \S 3]{saa}.

Let $W(k)$ be the ring of Witt vectors on $k$ and $K(k)$ be the field of fractions of $W(k)$. The Frobenius automorphism $x\mapsto x^p$ of $k$ lifts to an endomorphism on $W(k)$ and an automorphism of $K(k)$, written $\sigma$: we have $K(k)^\sigma=\Q_p$. A \emph{$k$-isocrystal} is a finite-dimensional $K(k)$-vector space $V$ provided with a $\sigma$-linear automorphism $F_V$. $k$-isocrystals form a $\Q_p$-linear tannakian category $\Fcriso(k)$, provided with a canonical $K(k)$-valued fibre functor (forgetting $F_M$) \cite[VI.3.2.1]{saa}. We have
\[\Fcriso(k)(\un,V)=V^{F_V} = \{v\in V\mid F_V v=v\}\]
for $V\in \Fcriso(k)$, where $\un=(K(k),\sigma)$ is the unit object. For $n\in \Z$, we write more generally
\begin{equation}\label{eq6.4}
V^{(n)} = V^{F_V=p^n} = \Fcriso(k)(\L_\crys^{n},V)=\Fcriso(k)(\un,V(n))
\end{equation}
where $V(n)=V\otimes \L_\crys^{-n}$ with $\L_\crys :=(K(k),p\sigma)$.

\subsection{The realisation} By \cite[VI.4.1.4.3]{saa}, the formal properties of crystalline cohomology yield a $\otimes$-functor
\[R_p:\sM(k)\to \Fcriso(k).\]

This functor sends the motive of a smooth projective variety $X$ to $H^*_\crys(X/W(k))\otimes_{W(k)} K(k)$ and the Lefschetz motive $\L$ to $\L_\crys$.

\subsection{The case of a finite field} Suppose that $k=\F_q$, with $q=p^m$. Then any object $M\in \sM(k)$ has its \emph{Frobenius endomorphism} $\pi_M$: if $M=h(X)$ for a smooth projective variety $X$, $\pi_M =\pi_X$ is the graph of the Frobenius endomorphism $F^m$ on $X$. This implies:

\begin{lemme}\label{l6.1} The action of $\pi_M$ on $R_p(M)$ equals that of $F_{R_p(M)}^m$.\qed
\end{lemme}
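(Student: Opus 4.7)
The plan is to reduce to the generating case $M=h(X)$ and then use pure functoriality of the crystalline realisation. More precisely, both sides of the claimed equality behave compatibly with direct sums, direct summands by idempotents, tensor products and Tate twists: on the motivic side, $\pi_{M\oplus N}=\pi_M\oplus \pi_N$, $\pi_{M\otimes N}=\pi_M\otimes \pi_N$, and $\pi_{pM}=p\pi_M p$ for any idempotent $p$; on the crystalline side, $F^m$ is a $\otimes$-natural endomorphism of the identity of $\Fcriso(\F_q)$ (since $\sigma^m=\mathrm{id}$ makes $F^m$ a $K(k)$-linear endomorphism that is a morphism of isocrystals). Since every $M\in \Mot_\rat(\F_q,\Q)$ is, up to Tate twist, a direct summand of some $h(X)$ with $X$ smooth projective, it suffices to treat $M=h(X)$.

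For $M=h(X)$, the endomorphism $\pi_M=\pi_X$ is by definition the class in $CH^{\dim X}(X\times X)_\Q$ of the graph of the $q$-power Frobenius morphism $F^m\colon X\to X$. By the general construction of a realisation functor from Chow correspondences (applied here to the crystalline theory, which possesses cycle classes, cup product, trace and projection formula), a correspondence acts through the usual formula $\alpha\mapsto p_{1*}(p_2^*(-)\cdot\alpha)$, and the graph $\Gamma_f$ of a morphism $f\colon X\to X$ thereby acts as $f^*$ under the contravariant convention. Applied to $f=F^m$, this yields that $R_p(\pi_X)$ acts on $H^*_\crys(X/W(k))\otimes K(k)$ as the iterated Frobenius pullback $(F^m)^*$, which is exactly the endomorphism denoted $F^m$ in the statement.

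The only real care needed is in the sign/direction conventions: one must check that the graph of $F^m$ goes to $(F^m)^*$ rather than to the Verschiebung or to a shifted variant. This is a matter of unwinding the construction of $R_p$ via \cite[VI.4.1.4.3]{saa} together with the contravariant motivic convention fixed in the paper; no new computation is required. Hence the claim follows, and indeed the statement is essentially tautological, which is why the author closes it with \qed.
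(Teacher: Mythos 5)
Your proof is correct and simply unwinds what the paper treats as immediate: Lemma \ref{l6.1} carries its \qed right after the statement because the preceding paragraph already identifies $\pi_{h(X)}$ with the graph of $F^m$, leaving the extension to arbitrary $M$ (via direct summands and Tate twists of the $h(X)$) and the fact that graphs realise to pullbacks under the contravariant convention as understood. Your reduction to the generating case $M=h(X)$, the $\otimes$-naturality observations (including the $K(k)$-linearity of $F^m$ since $\sigma^m=\mathrm{id}$), and the convention check are exactly the verifications the author is tacitly invoking.
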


Let $\bar k$ be an algebraic closure of $k$. There is an obvious functor
\begin{equation}\label{eq6.5}
\Fcriso(k)\to \Fcriso(\bar k),\quad V\mapsto \bar V:=V\otimes_{K(k)} K(\bar k)
\end{equation}
which is compatible with the extension of scalars $\sM(k)\to \sM(\bar k)$ via the realisation functors $R_p$ for $k$ and $\bar k$. Moreover $F_M^m$ is $K(k)$-linear, therefore one can talk of its eigenvalues. We have the following result of Milne \cite[Lemma 5.1]{milne}:

\begin{lemme}\label{lmilne} One has an equality
\[\det(1 - \gamma t\mid \bar M^{(n)} ) =\prod_{v(a) = v(q^n)} (1 - (q^n/at))\]
where $\gamma$ is the arithmetic Frobenius and $a$ runs through the eigenvalues of $F_M^m$ having same valuation as $q^n$.
\end{lemme}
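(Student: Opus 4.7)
The plan is to reduce to the pure slope $n$ situation via the Dieudonn\'e--Manin classification, identify $\bar M^{(n)}$ as the $\Q_p$-form of the slope $n$ part of $\bar M$, and then compute $\gamma$ on $\bar M^{(n)}$ by comparing $F_{\bar M}^m$ with its ``linearisation'' $F_M^m\otimes 1$.

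First I would decompose $\bar M=\bigoplus_\lambda \bar M_\lambda$ into isoclinic parts and verify that $\bar M^{(n)}\subseteq \bar M_n$. On a simple Dieudonn\'e--Manin factor $E_{s/t}=K(\bar k)[F]/(F^t-p^s)$ with $\gcd(s,t)=1$, writing $x=\sum_{i=0}^{t-1} a_i F^i$ and equating $Fx=p^n x$ coefficient by coefficient yields $a_i=p^{-in}\sigma^i(a_0)$ together with $\sigma^t(a_0)=p^{tn-s}a_0$; taking $p$-adic valuations forces either $a_0=0$ or $tn=s$, and in the latter case $\gcd(s,t)=1$ gives $t=1$, $\lambda=n$. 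On the slope $n$ part, $F$ acts coordinatewise as $p^n\sigma$ on $\bar M_n\simeq K(\bar k)^{d_n}$, so $\bar M^{(n)}\simeq \Q_p^{d_n}$ and the natural map
\[\bar M^{(n)}\otimes_{\Q_p} K(\bar k)\iso \bar M_n\]
is an isomorphism of $K(\bar k)$-vector spaces.

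Next I would introduce $\bar F_M^m:=F_M^m\otimes 1$, a $K(\bar k)$-linear operator on $\bar M$ commuting with both $F_{\bar M}$ and $\gamma$; in particular it preserves $\bar M^{(n)}$ and acts $\Q_p$-linearly there. A direct check on pure tensors gives
\[F_{\bar M}^m=\bar F_M^m\circ(1\otimes\sigma^m)=\bar F_M^m\circ\gamma,\]
using that $\sigma^m=\gamma$ on $K(\bar k)$ since $q=p^m$. Iterating $F_{\bar M}(x)=p^n x$ yields $F_{\bar M}^m=q^n\cdot\mathrm{id}$ on $\bar M^{(n)}$, whence $\gamma=q^n(\bar F_M^m)^{-1}$ as $\Q_p$-linear operators on $\bar M^{(n)}$. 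Via the isomorphism above, the characteristic polynomial of $\bar F_M^m|_{\bar M^{(n)}}$ in $\Q_p[T]$ has the same multiset of roots as that of $F_M^m|_{M_n}$, whose roots are by definition the eigenvalues $a$ of $F^m$ on $M$ with $v(a)=mn=v(q^n)$. The eigenvalues of $\gamma$ on $\bar M^{(n)}$ are therefore $q^n/a$ for such $a$, which yields
\[\det(1-\gamma t\mid \bar M^{(n)})=\prod_{v(a)=v(q^n)}\bigl(1-(q^n/a)t\bigr).\]

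The main obstacle is essentially bookkeeping: keeping straight the various flavours of linearity — the $\sigma$-semilinear $F_{\bar M}$, the $K(k)$-linear $F_M^m$, the $K(\bar k)$-linear auxiliary $\bar F_M^m$, and the $\Q_p$-linear restriction to $\bar M^{(n)}$ — and verifying that the relevant operators commute and that characteristic polynomials descend correctly through the tower $\Q_p\subset K(k)\subset K(\bar k)$. Once these compatibilities are in place, the argument is a direct, if delicate, computation.
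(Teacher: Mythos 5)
Your proof is correct. Note that the paper does not actually prove this lemma: it is stated with a bare citation to Milne \cite[Lemma 5.1]{milne}, so there is no internal argument to compare against. Your reconstruction --- decompose $\bar M$ via Dieudonn\'e--Manin to identify $\bar M^{(n)}$ as a $\Q_p$-form of the slope-$n$ part, then compare $F_{\bar M}^m$ with its linearisation $\bar F_M^m = F_M^m\otimes 1$ and use $F_{\bar M}=p^n$ on $\bar M^{(n)}$ to get $\gamma = q^n(\bar F_M^m)^{-1}$ --- is, as far as I recall, essentially Milne's own argument. The $E_{s/t}$ computation is right, and the semilinearity identity $F_{\bar M}^m = \bar F_M^m\circ\gamma$ is correctly justified using $\sigma^m=\gamma$ on $K(\bar k)$ together with $\sigma^m=\mathrm{id}$ on $K(k)$; the commutation of $\bar F_M^m$ and $\gamma$ with $F_{\bar M}$, needed so that both preserve $\bar M^{(n)}$, also checks out.

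The one step you pass over too quickly is the sentence identifying the eigenvalues of $\bar F_M^m|_{\bar M^{(n)}}$ with the eigenvalues $a$ of $F_M^m$ with $v(a)=mn$, where you invoke a slope summand $M_n$ of $M$ over $K(k)$ and call the eigenvalue/slope correspondence a ``definition''. It is rather a theorem of Manin (the Newton slopes of $M$ are $v(a)/m$ for $a$ the eigenvalues of the linear operator $F_M^m$), and the existence of $M_n$ over $K(k)$ needs a word: one takes $M_\lambda=\Ker P_\lambda(F_M^m)$, where $P_\lambda$ is the valuation-$m\lambda$ factor of the characteristic polynomial $\det(T-F_M^m)$; this is $F_M$-stable because that characteristic polynomial, hence each $P_\lambda$, has coefficients in $\Q_p$ (since $\sigma$ conjugates the matrix of $F_M^m$ by that of $F_M$). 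Alternatively you can stay over $K(\bar k)$: the characteristic polynomial of $\bar F_M^m$ on $\bar M$ equals $\det(T-F_M^m)$, the decomposition $\bar M=\bigoplus_\lambda\bar M_\lambda$ is $\bar F_M^m$-stable, and Manin's theorem again identifies the factor carried by $\bar M_n$ as the valuation-$mn$ one. This is a matter of attribution and completeness rather than a genuine gap.
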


\subsection{Logarithmic Hodge-Witt cohomology} 

\begin{prop} \label{p6.2} Let $X/k$ be smooth projective. Then, for any $i,n\in \Z$, there is a canonical isomorphism
\[H^i(X,\Q_p(n))\iso (H^i_\crys(X/W(k))\otimes_{W(k)} K(k))^{(n)} \]
where the left hand side is logarithmic Hodge-Witt cohomology as in Milne \cite[p. 309]{milne}.
\end{prop}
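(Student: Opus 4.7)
Set $M := H^i_\crys(X/W(k))\otimes_{W(k)} K(k)$ with its $\sigma$-linear Frobenius $\phi$, so by \eqref{eq6.4} one has $M^{(n)} = M^{\phi = p^n}$. I would assemble two classical ingredients, both going back to Illusie.

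The first is the fundamental exact sequence of pro-\'etale sheaves on $X$
\[ 0 \to W_\bullet\Omega^n_{X,\log} \to W_\bullet\Omega^n_X \by{1-F} W_\bullet\Omega^n_X \to 0, \]
where $F$ is the divided Frobenius on the de Rham-Witt complex (satisfying $FV=p$). Taking \'etale cohomology, passing to $\lim$ over the level $r$, and tensoring with $\Q_p$ --- permissible once one invokes Illusie-Raynaud's theorem on finite generation of $H^*(X,W\Omega^n_X)$ modulo torsion to make the relevant pro-systems Mittag-Leffler --- yields an exact sequence
\[ H^{i-n-1}(X,W\Omega^n_X)\otimes K \by{1-F} H^{i-n-1}(X,W\Omega^n_X)\otimes K \to H^i(X,\Q_p(n)) \to H^{i-n}(X,W\Omega^n_X)\otimes K \by{1-F} H^{i-n}(X,W\Omega^n_X)\otimes K. \]

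The second is the Illusie-Raynaud degeneration of the slope spectral sequence modulo torsion: after $\otimes K$, each $H^{j}(X,W\Omega^n_X)\otimes K$ is canonically identified with the slope $[n,n+1)$ summand of $H^{j+n}_\crys(X/W(k))\otimes K$, in such a way that the divided Frobenius $F$ corresponds to $p^{-n}\phi$. On such a group all eigenvalues of $F$ thus have valuation in $[0,1)$; so $1-F$ is surjective on $H^{i-n-1}(X,W\Omega^n_X)\otimes K$ (the $F=1$ part splits off as a direct summand and $1-F$ is invertible on its complement), making the leftmost cokernel in the sequence above vanish. On the right, the $F=1$ eigenspace equals $M_{[n,n+1)}^{\phi=p^n}$; but any $m\in M$ satisfying $\phi m = p^n m$ is pure of slope $n$ and so already lies in $M_{[n,n+1)}$, whence this eigenspace coincides with $M^{(n)}$. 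Combining, one gets a canonical isomorphism $H^i(X,\Q_p(n)) \iso M^{(n)}$.

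The main obstacle, I expect, is the analysis of $1-F$ on Hodge-Witt cohomology after inverting $p$: both its surjectivity (needed for the leftmost cokernel to vanish) and the identification of its fixed locus with $M^{(n)}$ rely essentially on the slope decomposition and its compatibility with $F$. Once the two Illusie-Raynaud inputs --- finite generation modulo torsion and degeneration of the slope spectral sequence after $\otimes K$ --- are granted, the rest is a routine passage through an inverse limit of the Illusie-Bloch-Kato short exact sequence.
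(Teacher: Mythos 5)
Your plan unfolds the Illusie--Raynaud inputs that the paper simply cites: the paper reduces to Illusie--Raynaud's bijectivity of $H^{i-n}(X,W\Omega^n_{\log})\to H^{i-n}(X,W\Omega^n)^F$ over an algebraically closed field and then descends by Galois invariants, whereas you attempt to reprove that bijectivity directly from the fundamental short exact sequence $0\to W\Omega^n_{\log}\to W\Omega^n\by{1-F}W\Omega^n\to0$ and the slope decomposition. This is a legitimate alternative route. Unfortunately there is a genuine gap at the crucial step.

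The parenthetical justification for surjectivity of $1-F$ on $H^{i-n-1}(X,W\Omega^n)\otimes K$ --- ``the $F=1$ part splits off as a direct summand and $1-F$ is invertible on its complement'' --- does not prove surjectivity; it would instead identify the cokernel of $1-F$ with exactly that $F=1$ part, which is generally nonzero (indeed it is the same kind of $\Q_p$-subspace that you are keeping on the other side as the $F=1$ kernel). Moreover, no $F$-stable $\Q_p$-direct summand of this sort exists inside a $K(k)$-vector space with $\sigma$-semilinear $F$, so the splitting itself is not available. The correct statement is that $1-F$ is surjective on \emph{any} isocrystal over an \emph{algebraically closed} field $k$, with kernel a $\Q_p$-vector space of dimension equal to the rank of the slope-$0$ part; this is a Lang/Artin--Schreier--Witt-type theorem and genuinely uses that $W(\bar k)$ has no nontrivial $\sigma$-cohomology. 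Over a general perfect $k$ (already for $k$ finite, or just $X=\Spec k$, $n=0$, $i=1$) the map $1-F$ is not surjective, the cokernel term does not vanish, and your exact sequence cannot be truncated as claimed. Thus you need to run the whole argument over $\bar k$ and then add the descent step that the paper records separately: pass from $\bar k$ to $k$ by taking Galois invariants. With (i) a correct surjectivity argument for $1-F$ over $\bar k$ and (ii) the descent step, the rest of your outline --- slope degeneration identifying $H^j(X,W\Omega^n)\otimes K$ with $(H^{j+n}_\crys\otimes K)_{[n,n+1[}$, and the observation that $F=1$ vectors in the slope-$[n,n+1[$ block are pure of slope $n$ --- does give the isomorphism.
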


\begin{proof} This is \cite[Prop. 1.15]{milne}, but unfortunately its proof is garbled (the last line of loc. cit., p. 310 is wrong). Let us recapitulate it. For simplicity, let  $W=W(k)$ and $K=K(k)$.

1) The slope spectral sequence
\[E_1^{i,j} = H^j(X,W\Omega^i)\Rightarrow H^{i+j}(X,W\Omega^\cdot)\simeq H^{i+j}_\crys(X/W)\]
degenerates up to torsion, yielding canonical isomorphisms of $k$-isocrystals
\[H^{i-n}(X,W\Omega^n)\otimes_W K\iso (H^i(X/W)\otimes_W K)_{[n,n+1[}  \]
where the index $[n,n+1[$ means the sum of summands of slope $\lambda$ for $n\le \lambda<n+1$  \cite[Th. 3.2 p. 615 and  (3.5.4) p. 616]{DRW}.

2) If $k$ is algebraically closed, the homomorphism
\[H^i(X,\Z_p(n)):=H^{i-n}(X,W\Omega^n_{\log})\to H^{i-n}(X,W\Omega^n)^F\]
is bijective \cite[Cor. 3.5 p. 194]{IR}.

3) In general, descend from $\bar k$ to $k$ by taking Galois invariants.
\end{proof}

By \cite[\S 2]{milne} and \cite{gros}, Chow correspondences act on logarithmic Hodge-Witt cohomology by respecting the isomorphisms of Proposition \ref{p6.2}: this yields a functor
\[H^*(-,\Q_p(n)): \sM(k)\to \Vec_{\Q_p}^*\]
and a natural isomorphism
\begin{equation}\label{eq6.2}
H^*(M,\Q_p(n))\overset{\sim}{\to} \Fcriso(k)(\L^n_\crys,R_p(M)),\;  M\in \sM(k).
\end{equation}

\subsection{The Brauer group and the Tate-\v Safarevi\v c group} We have

\begin{prop}[\protect{\cite[(5.8.5) p. 629]{DRW}}] Let $k$ be algebraically closed and $X/k$ be smooth projective. Then there is an exact sequence 
\[
0\to \NS(X)\otimes\Z_p\to H^2(X,\Z_p(1))\to T_p(\Br(X))\to 0.
\]
\end{prop}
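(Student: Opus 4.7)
The natural strategy is to transcribe the classical $\ell\neq p$ Kummer argument into $p$-adic language, using flat cohomology with $\mu_{p^n}$ coefficients together with the Bloch--Kato--Illusie comparison to logarithmic Hodge--Witt cohomology. The flat Kummer sequence
\[0\to \mu_{p^n}\to \bG_m\by{p^n}\bG_m\to 0\]
on $X_{fl}$, combined with the identifications $H^1_{fl}(X,\bG_m)=\Pic(X)$ and $H^2_{fl}(X,\bG_m)=\Br(X)$ (flat--\'etale comparison for the smooth group scheme $\bG_m$), yields short exact sequences
\[0\to \Pic(X)/p^n\to H^2_{fl}(X,\mu_{p^n})\to \Br(X)[p^n]\to 0.\]

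Next I would simplify the left-hand term. Since $k$ is algebraically closed, $\Pic^0(X)=\Pic^0_{X/k}(k)$ is the group of $k$-points of an abelian variety and is therefore $p$-divisible as an abstract abelian group. Consequently $\Pic^0(X)\subset p^n\Pic(X)$, so the natural surjection $\Pic(X)/p^n\surj \NS(X)/p^n$ is an isomorphism. Together with the identification $H^2_{fl}(X,\mu_{p^n})\iso H^1(X_{\et},W_n\Omega^1_{\log})$ supplied by Bloch--Kato--Gabber (compatible with the definition of $H^i(-,\Z_p(n))$ used in Proposition \ref{p6.2}), the displayed sequence becomes
\[0\to \NS(X)/p^n\to H^2_n(X,\Z_p(1))\to \Br(X)[p^n]\to 0\]
where $H^2_n$ denotes the finite-level cohomology.

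To conclude, I would pass to the inverse limit over $n$. Because $\NS(X)$ is finitely generated, $\{\NS(X)/p^n\}$ satisfies the Mittag--Leffler condition and its $\varprojlim{}^1$ vanishes; hence the limit sequence remains exact. The outer terms then become $\NS(X)\otimes\Z_p$ and $T_p\Br(X)$ respectively, while the middle term becomes $H^2(X,\Z_p(1))$ by construction, giving the stated sequence.

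The main obstacle is the third ingredient above, namely the canonical isomorphism $H^2_{fl}(X,\mu_{p^n})\iso H^1(X_{\et},W_n\Omega^1_{\log})$: this is the genuinely deep input, resting on the Bloch--Kato--Gabber theorem on the structure of $W_n\Omega^\cdot_{\log}$ and Illusie's comparison between flat and \'etale cohomology in characteristic $p$. The remaining manipulations (Kummer theory, divisibility of $\Pic^0(X)$, Mittag--Leffler on a finitely generated group) are formal; alternatively, one may bypass flat cohomology entirely and argue from Illusie's fundamental sequence $0\to W_n\Omega^1_{\log}\to W_n\Omega^1\by{1-F} W_n\Omega^1\to 0$, which is exactly how \cite[(5.8.5)]{DRW} is established.
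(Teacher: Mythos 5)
Your argument is correct and is essentially the one Illusie uses to establish (5.8.5) (the paper itself does not reprove this; it simply cites Illusie): flat Kummer sequence, identification $\Pic(X)/p^n\iso\NS(X)/p^n$ via $p$-divisibility of $\Pic^0(X)$, the comparison $H^2_{fl}(X,\mu_{p^n})\iso H^1(X_{\et},W_n\Omega^1_{\log})$, and a Mittag--Leffler passage to the limit. One attribution quibble: the comparison isomorphism between flat $\mu_{p^n}$-cohomology and logarithmic Hodge--Witt cohomology that you invoke is itself a theorem of Illusie (in the very reference being cited, building on Milne's $n=1$ case), not of Bloch--Kato--Gabber, whose theorem concerns the local structure of $W_n\Omega^\bullet_{\log}$ over fields and postdates Illusie's paper; and your closing remark that (5.8.5) is ``exactly'' established from the sequence $0\to W_n\Omega^1_{\log}\to W_n\Omega^1\xrightarrow{1-F} W_n\Omega^1\to 0$ alone is too strong --- that sequence is an ingredient, but one still needs the comparison with $\bG_m$/Kummer data to see $\Pic$ and $\Br$ appear.
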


As before, Chow correspondences act on this exact sequence. Therefore if $X=S$ is a surface, applying the projector $\pi^2_\tr$ defining $t^2(S)$, we get an isomorphism
\[H^2(t^2(S),\Q_p(1))\simeq V_p(\Br(S))\]
hence, taking \eqref{eq6.2} into account:
\[\Fcriso(k)(\L_\crys,R_p(t^2(S)))\simeq V_p(\Br(S)).\]

If now $K/k$ is a function field in one variable and $A$ is an abelian variety over $K$, using the projector $r(\pi)$ from the proof of Proposition \ref{p.sha}, we get an isomorphism
\begin{equation}\label{eq6.3}
\Fcriso(k)(\L_\crys,R_p(\sha(A,K/k)))\simeq V_p(\Sha(A,K))
\end{equation}
using Lemma \ref{l2.2} (2).

\section{Functional equation, order of zero and special value; proof of Corollary \ref{ckt}}\label{s:finite}

\subsection{Functional equation} Recall the functional equation of the zeta function of a pure  motive $M$ of weight $w$ over $k=\F_q$:
\[\zeta(M^*,-s) = \det(M) (-q^{-s})^{\chi(M)} \zeta(M,s)\]
where $M^*$ is the dual of $M$, $\chi(M)$ is the Euler  characteristic of $M$ (computed for example with the help of its $l$-adic  realisation) and 
\[\det(M)=\pm q^{w\chi(M)/2}\] 
is the determinant of the Frobenius endomorphism of $M$. Applying this to Theorem \ref{t0.2}, we get the  following functional equation for $L(K,A,s)$:
\[L(K,A,2-s) = a (-q^{-s})^\beta{}L(K,A,s)\]
with
\begin{align*}
\beta &= -2\chi(h^1(B))-\chi(\sha(A,K/k))-\chi(\ln(A,K/k))\\
&= 4 \dim B - \dim V_l(\Sha(A,K\bar k)) - \rg A(K\bar k) \quad\text{for any prime } l\\
a &= \left(\det h^1(B)\det h^1(B)(-1)\det \sha(A,K/k)\det\ln(A,K/k)(-1)\right)^{-1}\\
&= \pm q^\beta.
\end{align*}

Note that \cite[(C') p. 193]{corr-gs} yields a different expression for $\beta$:
\[
\beta = \chi(j_*H^1(\bar A,\Q_l))=  2-2g - \deg(\ff)
\]
where $g$ is the genus of $C$ and $\ff$ is the conductor of $A$ (relative to $K/k$), and the second equality follows from \cite[Th. 1]{raynaud}. Is there a direct proof that these two expressions coincide?

\subsection{Order of zero: proof of Corollary \ref{ckt}}\label{ooz} a) It suffices to show that $\ord_{s=1} Z(\sha(A,K/k))= \dim V_l(\Sha(A,K\bar k))^{(1)}$ for any prime $l$. This order can be computed through the action of the Frobenius endomorphism $\pi_\sha$ of $\sha(A,K/k)(1)$ on $R(\sha(A,K/k)(1))=R_l(\sha(A,K/k))(-1)$ for any realisation functor $R$ on $\sM(k)$. If we use the $l$-adic realisation $R_l$ for $l\ne p$, the claim is clear by Proposition \ref{p.sha}  since $R_l(\pi_\sha)$ acts as the inverse of $\gamma$. If we now take $R_p$, we find from \eqref{eq6.4}, \eqref{eq6.5} and  \eqref{eq6.3}:
\[R_p(\sha(A,K\bar k/\bar k)(1))^{(0)}\simeq \overline{ R_p(\sha(A,K/k)(1))}^{(0)}\simeq V_p(\Sha(A,K\bar k)).\]

By Lemma \ref{lmilne}, we have
\[\det(1 - \gamma t\mid \overline{R_p(\sha(A,K/k)(1))}^{(0)} ) =\prod_{v(a) = 0} (1 - 1/at)\]
where $a$ runs through the eigenvalues of $F_\sha^m$ acting on $R_p(\sha(A,K/k)(1))$, with valuation $0$. By lemma \ref{l6.1}, $F_\sha^m=R_p(\pi_\sha)$, so we are done.

Note that this argument does \emph{not} show that $\dim V_l(\Sha(A,K))$ is independent of $l$. 

b) For any $l$, $V_l(\Sha(A,K\bar k))^{(1)}=0$ $\iff$ $V_l(\Sha(A,K\bar k))^{G_k}=V_l(\Sha(A,K))=0$ $\iff$ $\Sha(A,K)\{l\}$ is finite. In view of a), this shows (i) $\iff$ (ii) $\iff$ (iii). Of course (iii) $\Rightarrow$ (iv) is classical, but let us give a proof using the motive $\sha(A,K/k)$. Let $P$ be the characteristic polynomial of $\pi_\sha$ acting on the realisations of $\sha(A,K/k)$: by Propositions \ref{p.sha} and \ref{p3.2}, it is independent of $l$ and (ii) implies that $P(q)\ne 0$. Hence $\gamma$ acts invertibly on $\Sha(A,K\bar k)\{l\}$ for $l\nmid P(q)$ and $\Sha(A,K\bar k)\{l\}^\gamma=0$ for such $l$. It remains to study $\Ker(\Sha(A,K)\to \Sha(A,K\bar k)^\gamma)$: by the Hochschild-Serre spectral sequence for the Galois cohomology of $A$, this kernel is contained in the group $H^1(G_k,A(K\bar k))$. The exact sequence
\[0\to B(\bar k)\to A(K\bar k)\to \LN(A,K\bar k/\bar k)\to 0,\]
Lang's theorem and the cohomological dimension of $G_k$ yield an isomorphism
\[H^1(G_k,A(K\bar k))\iso H^1(G_k,\LN(A,K\bar k/\bar k)).\]

But the right hand side is finite since $\LN(A,K\bar k/\bar k)$ is finitely generated. Hence $\Sha(A,K)\{l\}=0$ for $l$ large enough, concluding the proof of (iv).

\subsection{Special value} \label{sv}
It is less obvious to relate Theorem \ref{t0.2} to the value of the principal  part in the Birch and Swinnerton-Dyer conjecture (\cite[Theorem p. 509]{schneider}, \cite{kato-trihan}):
\begin{equation}\label{eq:bsd}
\operatornamewithlimits{lim}_{s\to 1} \frac{L(A,s)}{(s-1)^{-\rho}}\sim \pm q^\rho\frac{|\Sha(A,K)||\det\langle ,\rangle_{A(K)}|}{|A(K)_\tors||A'(K)_\tors|}\prod_{c\in C}|\Phi_c(k(c))|,
\end{equation}
where $\rho= \rg A(K)$, $\langle,\rangle_{A(K)}$ is the height pairing constructed in \cite[p. 502]{schneider}, $A'$ is the dual abelian variety to $A$ and the $\Phi_c$ are the groups of connected components of the N\'eron model of $A$ over $C$, as in \S \ref{redner}.  

It seems that the explicit expression of $L(A,s)$ could actually be used to provide an expression of the left hand side of \eqref{eq:bsd} independently of the Birch and Swinnerton-Dyer conjecture, in the spirit of \eqref{eq.corg}. This can presumably be done by the method of \cite{schneider}: I hope to come back to it in the future. Let me only note that in Theorem \ref{t0.2}, the factors $Z(h^1(B),q^{-s})$ and $Z(h^1(B),q^{1-s})$ respectively contribute by $q^{-g(B)}|B(k)|$ and $|B(k)|$ (as usual, $B:=\Tr_{K/k} A$), while $Z(\ln(A,K/k),q^{1-s})$ contributes by
\[\pm q^{-\rg A(K)} \frac{\det \langle,\rangle_{A(K\bar k)}}{\det \langle,\rangle_{A(K)}} \frac{|A(K)_\tors/B(k)|}{|(\LN(A,K\bar k/\bar k)_\gamma)_\tors|} \]
where $\langle,\rangle_{A(K)}$ and  $\langle,\rangle_{A(K\bar k)}$ are as above. This folllows from the elementary lemma, in the spirit of \cite[Lemma z.4]{tate}:

\begin{lemme} Let $\langle,\rangle:M\times M'\to \Q$ be a $\Q$-non-degenerate pairing between finitely generated abelian groups. Suppose $M$ and $M'$ are provided with operators $\gamma,\gamma'$ which are adjoint for the pairing, and $\Q$-semi-simple (e.g., $\gamma$ is of finite order). Let $P=\det(1-\gamma T)$ be the inverse characteristic polynomial of $\gamma$ acting on $M_\Q$. Then $\rho:=\ord_{T=1} P=\rg M^\gamma$ and, if $P'=P/(1-T)^\rho$,
\[|P'(1)|=\frac{\det \langle,\rangle}{\det \langle,\rangle^\gamma} \frac{|(M^\gamma)_\tors|}{|(M_\gamma)_\tors|}\]
where $M^\gamma$ (resp. $M_\gamma$) denotes the $\gamma$-invariants (resp. coinvariants) of $\gamma$ and $\langle,\rangle^\gamma$ is the ($\Q$-non-degenerate) pairing induced by $\langle,\rangle$ on $M^\gamma\times M'^{\gamma'}$.
\end{lemme}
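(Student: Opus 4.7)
The strategy is to split everything using the $\Q$-semisimple decomposition of $F$. Since $F$ is $\Q$-semi-simple, there is a canonical splitting $M_\Q=V_1\oplus V_{\neq 1}$, with $V_1=\Ker(F-1|_{M_\Q})$ and $V_{\neq 1}$ the sum of the remaining generalised eigenspaces, equivalently $V_{\neq 1}=(1-F)M_\Q$. There is an analogous decomposition $M'_\Q=V'_1\oplus V'_{\neq 1}$, and by the adjointness of $F$ and $F'$ the pairing vanishes on $V_1\times V'_{\neq 1}$ and on $V_{\neq 1}\times V'_1$; hence the induced pairings on the two remaining blocks are both $\Q$-non-degenerate. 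This yields the first assertion, $\rho=\dim V_1=\rg M^F$, and identifies $P'(1)=\det(1-F\mid V_{\neq 1})$.

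For the determinant formula, set $\bar M=M/M_\tors$, and let $\Lambda:=\bar M\cap V_{\neq 1}$, a full-rank sublattice of $V_{\neq 1}$; define $\Lambda'\subset V'_{\neq 1}$ similarly. The elementary input is that for any $F$-stable lattice in a vector space on which $1-F$ is invertible, the index $[\Lambda:(1-F)\Lambda]$ equals $|\det(1-F\mid V_{\neq 1})|=|P'(1)|$. One then interprets $(M_F)_\tors$ lattice-theoretically: from the tautological exact sequence
\[0\to M^F\to M\xrightarrow{1-F} M\to M_F\to 0\]
together with the identification $\bar M/\Lambda\simeq\pi_1(\bar M)\subset V_1$ (the torsion-free quotient of $M_F$, where $\pi_1$ is the projection to $V_1$), one obtains $(M_F)_\tors\simeq\Lambda/(1-F)\bar M$. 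Setting $e:=[\bar M:\bar M^F\oplus\Lambda]$, where $\bar M^F:=M^F/(M^F)_\tors$, and noting that $(1-F)\bar M=(1-F)\pi_{\neq 1}(\bar M)$, one deduces the intermediate identity $|P'(1)|=e\cdot|(M_F)_\tors|$, and symmetrically $|P'(1)|=e'\cdot|(M'_{F'})_\tors|$ on the $M'$-side.

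The last step compares the pairing determinants. Picking bases of $\bar M,\bar{M'}$ adapted to the index-$e$ (resp. $e'$) sublattices $\bar M^F\oplus\Lambda\subset\bar M$ makes the Gram matrix block-diagonal thanks to the orthogonality of $V_1$ with $V'_{\neq 1}$ (and of $V_{\neq 1}$ with $V'_1$), giving
\[ee'\det\langle,\rangle=\det\langle,\rangle^F\cdot\det\langle,\rangle|_{\Lambda\times\Lambda'}.\]
The Gram determinant $\det\langle,\rangle|_{\Lambda\times\Lambda'}$ is then eliminated using the adjointness of $1-F$ and $1-F'$: $1-F$ is an automorphism of $V_{\neq 1}$ of determinant $\pm P'(1)$ whose transpose is $1-F'$, so the change of basis $\Lambda\leftrightarrow(1-F)\Lambda$ (of index $|P'(1)|$) inside $V_{\neq 1}$ converts $\det\langle,\rangle|_{\Lambda\times\Lambda'}$ into the appropriate combination of $|P'(1)|$ and of the torsion contributions $|(M^F)_\tors|$, $|(M'_{F'})_\tors|$, the latter entering when one passes from $\bar M^F$ to $M^F$. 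Substituting back yields the stated formula.

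The main obstacle is precisely the bookkeeping of torsion factors and lattice indices, which becomes delicate when both $M$ and $M'$ carry torsion. A conceptually cleaner proof would presumably interpret the formula as an equality of Tamagawa-style volumes for the two-term complexes $[M\xrightarrow{1-F}M]$ and $[M'\xrightarrow{1-F'}M']$ (placed in degrees $0,1$) made dual by the pairing, and then reduce multiplicatively to the two trivial extremes $F=\operatorname{id}$ and $F$ without eigenvalue $1$, where the classical Tate-style identity $|M_F|/|M^F|=|\det(1-F\mid M_\Q)|$ provides what is needed.
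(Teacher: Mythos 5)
The paper states this lemma without proof (it is presented as an elementary computation ``in the spirit of'' Tate), so there is no internal proof to compare against; I therefore evaluate the proposal on its own terms. Your opening steps are sound: the $\Q$-semisimple splitting $M_\Q=V_1\oplus V_{\neq 1}$ and its analogue for $M'$, the orthogonality of $V_1$ with $V'_{\neq 1}$ (and of $V_{\neq 1}$ with $V'_1$) deduced from adjointness, the identifications $\rho=\dim V_1=\rg M^F$ and $P'(1)=\det(1-F\mid V_{\neq 1})$, the index computation $[\Lambda:(1-F)\Lambda]=|P'(1)|$, and the block-diagonal Gram identity $ee'\det\langle,\rangle=\det\langle,\rangle^F\cdot\det\langle,\rangle|_{\Lambda\times\Lambda'}$ with $e=[\bar M:\bar M^F\oplus\Lambda]$ are all correct.

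The torsion bookkeeping, however, contains a concrete error. You assert $(M_F)_\tors\simeq\Lambda/(1-F)\bar M$, hence $|P'(1)|=e\cdot|(M_F)_\tors|$, but this only holds when $M$ is torsion-free. Take $M=\Z\oplus\Z/2$ with $F(m,t)=(-m,t)$: then $\bar M=\Z$, $\Lambda=\Z$, $(1-F)\bar M=2\Z$, so $\Lambda/(1-F)\bar M\simeq\Z/2$, whereas $M_F=M/(2\Z\oplus 0)\simeq\Z/2\oplus\Z/2$ has torsion of order $4$. Applying the snake lemma for $1-F$ to $0\to M_\tors\to M\to\bar M\to 0$ shows the correct relation is $|P'(1)|=e\,|(M_F)_\tors|/|(M_\tors)_F|$: the $F$-coinvariants of the torsion enter, and you have not accounted for them. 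This is exactly the ``delicate bookkeeping'' you flag, but flagging it is not the same as doing it. The closing paragraph --- ``converts $\det\langle,\rangle|_{\Lambda\times\Lambda'}$ into the appropriate combination\dots\ Substituting back yields the stated formula'' --- is not an argument: you never actually eliminate $\det\langle,\rangle|_{\Lambda\times\Lambda'}$, nor track the extra torsion factor, nor check signs and reciprocals. That last check matters: in the example above the left-hand side is $|P'(1)|=2$ while the printed right-hand side evaluates to $\tfrac{1}{1}\cdot\tfrac{2}{4}=\tfrac12$, so there is a normalization or reciprocal to be resolved that a hand-wave cannot supply. Either carry out the full lattice/torsion computation, or implement the determinant-of-complexes reduction you sketch at the end; as written, the proof is incomplete.
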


\section{Surfaces over a global field}\label{s:global}

Let $K$ be a finitely generated field [over its prime subfield]. In \cite[\S 4]{tatepoles}, Tate associates to a smooth, projective, geometrically irreducible $K$-variety $V$ the collection of zeta functions $\zeta(X,s)$ for smooth projective models $X\to Y$of $V$, where $X,Y$ are of finite type over $\Z$, $Y$ is a regular model of $K$ and $X$ is irreducible. For $y\in Y_{(0)}$, the zeta function of the fibre $X_y$ may be factored as an alternating product of polynomials $P_i(X_y)$ corresponding to the $l$-adic cohomology groups of $X_y$, and the $P_i(X_y)$ are now known to be independent of $l$ by \cite{weilI}. This provides a factorisation
\[\zeta(X,s)=\prod_{i=0}^{2d} \Phi_i(X/Y,s)^{(-1)^{i}}, \quad d=\dim V \]
and, for each $i$, $\Phi_i(X/Y,s)$ converges absolutely for $\Re(s)>\delta+i/2$ where $\delta=\dim Y$ is the \emph{Kronecker dimension of $K$}, again by \cite{weilI}.

Tate then observes that, assuming one knows that $\Phi_i(X/Y,s)$ has an analytic continuation for $\Re(s)>\delta+i/2-1$, the order its of zeroes and poles  in the strip $\delta+ i/2-1<\Re(s)\le \delta+i/2$ depends only on $V$ and not on the choice of $X\to Y$, which allows him to formulate his conjectures on these orders (for $s\in \N$).

The question arises whether one can do better and associate canonical analytic functions to $V$ rather that functions $\Phi_i(X/Y,s)$ depending on a model, for example to make sense of special values. If $K$ is a global field, this is the subject of Serre's paper \cite{dpp}. To the best of my knowledge, this question is open when  $\delta>1$; the purpose of this section is to offer a partial answer when $\delta=2$, in the cases $i=0,1$. 

We may write $K$ as a  function field in one variable over some global field $k$; without loss of generality, we may assume $k$ algebraically closed in $K$. There is a finite purely inseparable extension $k'/k$ such that  $Kk'=k'(C)$ for some smooth projective $k'$-curve $C$; of course $k'=k$ if $\car K=0$, and $k$ is then unique as the algebraic closure of $\Q$ in $K$.  In view of Theorem \ref{c4.1} b), we set:

\begin{defn}\label{d6.2}  a) If $\car K=0$, $L(K,h^0(V),s)=L(k,h(C),s)$.\\
b) If $\car K>0$, $L(K,h^0(V),s)=\displaystyle\frac{L(k',h(C),s)}{\zeta(\ln(J(C),k'/\F_q),s-1)}$. Here $J(C)$ is the Jacobian of $C$ and $\F_q$ is the field of constants of $k'$. 
\end{defn} 

Note that if $Y$ is a regular model of $K$ over $\Z$, then $L(K,h^0(V),s)$ and $\zeta(Y,s)$ only differ by a finite number of Euler factors. 

We now pass to the case $i=1$. In view of Theorem \ref{t0.2}, we set:

\begin{defn}\label{d6.1} a) If $A$ is an abelian variety over $K$, 
\begin{multline*}
L(K,h^1(A),s)= L(k,h^1(\Tr_{K/k} A),s)L(k,h^1(\Tr_{K/k} A),s-1)\\
L(k,\sha(A,K/k),s)L(k,\ln(A,K/k),s-1)
\end{multline*}
where the right hand side is defined in terms of $l$-adic realisations.\\
b) $L(K,h^1(V),s)=L(K,h^1(A),s)$, where $A$ is the Albanese variety of $V$.
\end{defn}


Definition \ref{d6.1} is independent of the choice of $l$ (invertible in $k$) because this is so for each individual factor in a): for $\sha(A,K/k)$ it follows from \cite[Satz 2.13]{rz} and  \cite[Cor. 0.6]{tsaito}, using Proposition \ref{p.sha}. 


\begin{qn} If $\car K >0$, is Definition \ref{d6.1} independent of the choice of $k$? 
\end{qn}

One may have to do a normalization similar to the one in Definition \ref{d6.2} b) to get a positive answer.

\begin{qn} Can one interpret $L(K,h^1(A),s)$, \emph{via} a trace formula, as an ``Euler" product of the form
\[L(C,j_* H^1_l(A),s) = \prod_{x\in C_{(0)}} L(k(x),i_x^*H^1_l(\sA),s)\]
where $\sA$ is the N\'eron model of $A$ over $C$?
\end{qn}

 (It is not even  clear that the right hand side converges!)

Let us now place ourselves again in the situation of \eqref{eq12.0}. It is natural to set $L(K,h^2(\Gamma),s)\allowbreak= L(k,h^0(\Gamma),s-1)$, and $L(K,h(\Gamma),s)= \prod_{i=0}^2L(k,h^i(\Gamma),s)$. Theorem \ref{t12.2} then gives the following analogue to Theorem \ref{c4.1} a):

\begin{thm}\label{p6.1} If $\car K=0$, one has
\[\frac{L(k,h(S),s)}{L(K,h(\Gamma),s)}= L(k,a(D),s-1).\qed\]
\end{thm}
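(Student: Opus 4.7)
The strategy is to apply the $L$-function functor to the $K_0$-identity of Theorem \ref{t12.2}. Multiplicativity of Euler factors in short exact sequences makes the assignment $V \mapsto L(k, V, s)$ a group homomorphism from $\K_l$ to the multiplicative group of formal Dirichlet series. Applying it to
\[[H^*_l(S_s)] - [R^*p_* j_* R^*f'_* \Q_l] = [D \otimes \Q_l(-1)]\]
immediately yields
\[\frac{L(k, H_l, s)}{L(k, H'_l, s)} = L(k, D(-1), s),\]
and the rightmost identity $L(k, D(-1), s) = L(k, D, s-1)$ is the standard Tate-twist shift of Euler factors (twisting by $\Q_l(-1)$ multiplies each geometric Frobenius eigenvalue by $N(v)$). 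So everything reduces to identifying the numerator with $L(k, h(S), s)$ and the denominator with $L(K, h(\Gamma), s)$.

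The numerator is essentially tautological: unpacking $H_l = \sum_i (-1)^i [H^i_l(S_s)]$ and using the sign convention $L(k, h^i(S), s) = \prod_v \det(\cdots)^{(-1)^{i+1}} = L(k, H^i_l(S), s)^{(-1)^i}$, the alternating signs cancel to give $L(k, H_l, s) = \prod_i L(k, h^i(S), s) = L(k, h(S), s)$.

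For the denominator, I break the double sum $H'_l = \sum_{q, r} (-1)^{q+r} [R^q p_* j_* R^r f'_* \Q_l]$ according to $r$. In each case the Lefschetz--Verdier trace formula, applied to the proper morphism $p: C \to \Spec k$, collapses the alternating product over $q$ into the single $L$-function $L(C, j_* R^r f'_* \Q_l, s)$. For $r = 0$ and $r = 2$ one has $R^r f'_* \Q_l = \Q_l$ and $\Q_l(-1)$ respectively, so this reduces to $L(K, h^0(\Gamma), s) = L(k, h(C), s)$ and $L(K, h^2(\Gamma), s) = L(k, h(C), s-1)$. For $r = 1$, I invoke Corollary \ref{c1.1} (applied Galois-equivariantly over $k_s/k$ as explained at the start of \S\ref{s:comp}), together with Proposition \ref{p.sha} and the fact that $R_l(\ln(J, K/k)) = \LN(J, K k_s/k_s) \otimes \Q_l$: the corresponding class in $\K_l$ is then the $l$-adic realization of
\[[h^1(B)] + [h^1(B)(-1)] - [\ln(J, K/k)(-1)] - [\sha(J, K/k)] \in \K.\]
Taking the $L$-function with the motivic signs (recalling that $\sha$ has weight $2$ and $\ln$ has weight $0$, and using $L(k, M, s-1) = L(k, M(-1), s)$) reproduces precisely the four factors in Definition \ref{d6.1} for $L(K, h^1(J), s) = L(K, h^1(\Gamma), s)$. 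Multiplying the three contributions yields $L(k, H'_l, s) = L(K, h(\Gamma), s)$.

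The only delicate step is the sign and Tate-twist bookkeeping in the $r = 1$ case; Definition \ref{d6.1} has been engineered precisely so that this matching works uniformly over any base $k$, whether of characteristic zero or positive. Beyond that, the result is a purely formal consequence of Theorem \ref{t12.2} together with the Lefschetz--Verdier trace formula, and no genuine obstacle arises.
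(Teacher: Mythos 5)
Your overall strategy---apply the $L$-function homomorphism on $\K_l$ to the $K_0$-identity of Theorem \ref{t12.2} and unfold Definition \ref{d6.1}---is exactly what the paper intends (the \qed after the statement is a signal that the proof is precisely this unwinding, parallel to Theorem \ref{c4.1}). Two remarks, however.

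First, there is a sign error in your $r=1$ bookkeeping. The $r=1$ contribution to $H'_l=\sum_{q,r}(-1)^{q+r}[R^qp_*j_*R^rf'_*\Q_l]$ carries the overall factor $(-1)^{r}=(-1)^1$, so (with $B=\Tr_{K/k}J$ and $\sha(J,K/k)=t^2(S)$) the corresponding class in $\K$ is
\[
-[h^1(B)]-[h^1(B)(-1)]+[\sha(J,K/k)]+[\ln(J,K/k)(-1)],
\]
the \emph{negative} of what you wrote. You appear to have read off the inner parenthesis in the displayed formula for $h'$ in the proof of Theorem \ref{t12.2} and dropped the leading minus sign. With your class, the motivic sign convention $L(k,M,s)=L(k,R_l(M),s)^{(-1)^{w(M)}}$ produces $L(K,h^1(\Gamma),s)^{-1}$ instead of $L(K,h^1(\Gamma),s)$, so the claimed match with Definition \ref{d6.1} fails as written. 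With the corrected sign the four exponents $n_i(-1)^{w_i}$ are all $+1$ and the matching works.

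Second, the invocation of the Lefschetz--Verdier trace formula is superfluous and, over a global base $k$, would require spreading out to integral models. Nothing of the sort is needed: since $L(k,-,s)\colon\K_l\to(\text{formal Dirichlet series})^\times$ is a group homomorphism, $L(k,H'_l,s)$ is \emph{by definition} the alternating product $\prod_{q,r}L(k,R^qp_*j_*R^rf'_*\Q_l,s)^{(-1)^{q+r}}$; one then simply groups by $r$ and substitutes the identifications of the nine terms already established in the proof of Theorem \ref{t12.2}.
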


If $\car K>0$, one has to take the normalization of Definition \ref{d6.2} b) into account; since Definition \ref{d6.1} may not be optimal, we skip this.
\enlargethispage*{30pt}

\begin{qn} The height pairing defined by Schneider in \cite[p. 507]{schneider}:
\[\sA^0(\bar C)\times A'(K\bar k)\to \Pic(\bar C)\]
induces a Galois-equivariant pairing
\begin{equation}\label{eq7.1}
\sA^0(\bar C)/B(\bar k)\times \LN(A',K\bar k/\bar k)\to \Z
\end{equation}
because $B(\bar k)$ and $B'(\bar k)$ are divisible; moreover, it presumably restricts to a pairing
\begin{equation}\label{eq6.1}B(\bar k)\times \LN(A',K\bar k/\bar k)\to \Pic^0(\bar C).
\end{equation}

One way to justify \eqref{eq6.1} would be to show that the functor $S\mapsto \Gamma(C\times_k S,\sA'\times_k S)$ on $k$-schemes of finite type is representable by a $k$-group scheme of finite type with connected component $B$, and that Schneider's pairing emanates from a pairing of $k$-group schemes. Then \eqref{eq6.1} would induce a Galois-equivariant homomorphism
\begin{equation}\label{eq7.2}
\LN(A',K\bar k/\bar k)\to \Hom_{\bar k}(B,J).
\end{equation}

Can one use \eqref{eq7.1} and \eqref{eq7.2} to describe the special values of $L(K,h^1(A),s)$?
\end{qn}

\end{document}